\documentclass[a4paper, reqno]{amsart}
\usepackage[margin=3.81cm]{geometry}
\usepackage[utf8]{inputenc}
\usepackage[english]{babel}

\usepackage{csquotes}
\usepackage{graphicx}
\usepackage[utf8]{inputenc}
\usepackage[T1]{fontenc}
\usepackage[dvipsnames]{xcolor}% needs to be loaded before todonotes, since todonotes loads xcolor without options.
\usepackage{subfiles}% allows us to compile individual sections

\usepackage[textsize = footnotesize]{todonotes}

\usepackage[mathscr]{eucal}
\usepackage[shortlabels]{enumitem}
\usepackage[super]{nth}
\usepackage{leftindex}

\usepackage[backend=biber,
    style=numeric,
    sorting=nyt,% sort by name, year, title
    giveninits=true,% render all first names as initials.
    isbn=false,
    maxalphanames=4,
    date=year,
    maxbibnames=99,
    backref=false]{biblatex}
\DeclareRobustCommand{\SkipTocEntry}[5]{}

\usepackage{amsmath}
\usepackage{amssymb}

\usepackage{amsthm}
\usepackage{thmtools}
\usepackage{graphicx}

\usepackage{dsfont} %blackboard bold 1
\usepackage{mathtools}
\usepackage{xparse}
\usepackage{tikz-cd}
\tikzset{%adjunction symbol
    symbol/.style={%
        draw=none,
        every to/.append style={%
            edge node={node [sloped, allow upside down, auto=false]{$#1$}}}
    }
}
\usepackage{spectralsequences}

\definecolor{leafgreen}{RGB}{48, 138, 3}

\usepackage[pdfborder={0 0 0}]{hyperref}
\hypersetup{
   colorlinks=true,
   allcolors=.,
   bookmarksdepth = 3
}
\usepackage[capitalize, nameinlink]{cleveref}
\crefformat{equation}{#2(#1)#3}
\Crefformat{equation}{#2(#1)#3}
\crefname{section}{Section}{Sections}
\crefname{subsection}{Section}{Sections}
\crefname{subsubsection}{Section}{Sections}
\crefname{subappendix}{Section}{Sections}
\crefname{subsubappendix}{Section}{Sections}
\usepackage{microtype}

\theoremstyle{plain}
\newtheorem{theorem}{Theorem}[section]
\newtheorem{introtheorem}{Theorem}

\newtheorem{lemma}[theorem]{Lemma}
\newtheorem{proposition}[theorem]{Proposition}

\newtheorem*{theorem*}{Theorem}

\theoremstyle{definition}
\newtheorem{definition}[theorem]{Definition}
\newtheorem{remark}[theorem]{Remark}
\newtheorem{warning}[theorem]{Warning}

\newtheorem{construction}[theorem]{Construction}

\theoremstyle{remark}

\DeclareMathOperator{\id}{id}
\DeclareMathOperator*{\colim}{colim}

\DeclareMathOperator{\triv}{triv}

\DeclareMathOperator{\BMod}{BMod}

\DeclareMathOperator{\BcoMod}{BcoMod}

\DeclareMathOperator{\Alg}{Alg}
\DeclareMathOperator{\coAlg}{coAlg}

\DeclareMathOperator{\Sym}{Sym}

\DeclareMathOperator{\coOp}{coOp}

\newcommand{\hide}[1]{}

\newlist{numberenum}{enumerate}{1}
\setlist[numberenum]{\upshape(\arabic*)}

\newcommand{\Fun}{\mathrm{Fun}}

\newcommand{\Lax}{\mathrm{Lax}}
\newcommand{\Laxunit}{\mathrm{Lax}^\mathrm{un}}
\newcommand{\Oplax}{\mathrm{Oplax}}
\newcommand{\Oplaxunit}{\mathrm{Oplax}^{\mathrm{un}}}
\newcommand{\FunL}{\mathrm{Fun}^{\mathrm{L}}}

\newcommand{\FunLM}{\Fun_{\lten}}
\newcommand{\FunRM}{\Fun_{\rten}}
\newcommand{\FunBM}{\Fun_{\bten}}
\newcommand{\iFunLM}{\underline\Fun_{\lten}}
\newcommand{\iFunRM}{\underline\Fun_{\rten}}
\newcommand{\FunLL}[1]{\FunLM}
\newcommand{\FunRR}[1]{\FunRM}
\newcommand{\FunBB}[2]{\FunBM}
\newcommand{\iFunL}[1]{\iFunLM}
\newcommand{\iFunR}[1]{\iFunRM}

\newcommand{\Fin}{\mathrm{Fin}}

\newcommand{\sA}{\mathscr{A}}
\newcommand{\sB}{\mathscr{B}}
\newcommand{\sC}{\mathscr{C}}
\newcommand{\sD}{\mathscr{D}}

\newcommand{\sP}{\mathscr{P}}
\newcommand{\sQ}{\mathscr{Q}}

\newcommand{\sO}{\mathscr{O}}

\newcommand{\sR}{\mathscr{R}}
\newcommand{\sS}{\mathscr{S}}

\newcommand{\sX}{\mathscr{X}}
\newcommand{\sY}{\mathscr{Y}}

\newcommand{\Sp}{\mathrm{Sp}}

\newcommand{\diffst}{\mathscr{D}\mathrm{iff}_{\mathrm{St}}}

\newcommand{\presl}{\mathscr{P}\mathrm{r}^{\mathrm{L}}}

\newcommand{\pressymstc}{\mathscr{P}\mathrm{r}^{\mathrm{Sym}}_{\mathrm{St}, \mathrm{nu}}}

\newcommand{\Cat}{\mathscr{C}\mathrm{at}}

\newcommand{\MMor}{\mathscr{M}\mathrm{or}}
\newcommand{\bMMor}{\mathbb{M}\mathrm{or}}
\newcommand{\coMMor}{\mathrm{co}\mathscr{M}\mathrm{or}}

\newcommand{\op}{\mathrm{op}}
\newcommand{\co}{\mathrm{co}}

\newcommand{\SSeq}{\mathrm{SSeq}}
\newcommand{\SSeqnu}{\mathrm{SSeq}^{\mathrm{nu}}}
\newcommand{\SSeqsp}{\mathrm{SSeq}_{+}}

\newcommand{\Exc}{\mathrm{Exc}}

\newcommand{\lin}{\mathrm{P}_1}
\newcommand{\KD}{\mathrm{KD}}

\newcommand{\reAlg}{\Lambda_{\mathrm{Alg}}}

\newcommand{\diffalg}{\mathrm{Diff}_{\mathrm{Alg}}}

\newcommand{\Sph}{\mathbb{S}}
\newcommand{\unit}{\mathbf{1}}

\makeatletter
\newcommand{\oset}[3][0ex]{%
	\mathrel{\mathop{#3}\limits^{
			\vbox to#1{\kern-2\ex@
				\hbox{$\scriptstyle#2$}\vss}}}}
\makeatother

\tikzset{
	rot90/.style={anchor=south, rotate=90, inner sep=.5mm}
} % Rotating the \sim symbol in a tikz label bij 90 degrees.

\NewDocumentCommand\derprojlim{e{_}}{\mathchoice
{\varprojlim  \IfValueT{#1}{_{\mathclap{#1}}}{}^{\!1\!}\mathop{}}
{\varprojlim^1  \IfValueT{#1}{_{#1}}}
{\varprojlim^1  \IfValueT{#1}{_{#1}}}
{\varprojlim^1  \IfValueT{#1}{_{#1}}}}

\newcommand{\Op}{\mathrm{Op}}

\newcommand{\lten}{\mathrm{LM}}
\newcommand{\rten}{\mathrm{RM}}
\newcommand{\bten}{\mathrm{BM}}

\makeatletter
 \def\subsection{\@startsection{subsection}{1}%
 \z@{.7\linespacing\@plus\linespacing}{.5\linespacing}%
 {\normalfont\bfseries\centering}}% NEW
\makeatother

\renewcommand{\epsilon}{\varepsilon}

\usepackage{calc}

\newcommand{\myarrow}[1]{%
  \xrightarrow{%
    \mathmakebox[\widthof{$\scriptstyle\Sigma^\infty_{\sP} \circ - \circ \Omega^\infty_{\sO}$}][c]{#1}%
  }%
}

\begin{document}

\title{Koszul duality and Morita categories}

\author{Max Blans}
\address{University of Oxford}
\email{max.blans@maths.ox.ac.uk}

\begin{abstract}
We prove an $(\infty,2)$-categorical version of Koszul duality for operads and cooperads in spectra by showing that there is an equivalence between a Morita category of operads and bimodules and a dual Morita category of cooperads and bicomodules.
This result subsumes various forms of operadic Koszul duality 
present in the literature.
% Our result is phrased in terms of the Morita category $\MMor$.
% This is an $(\infty, 2)$-category whose objects are spectral operads and whose $1$-morphisms are bimodules.
% There is also a dual Morita category $\MMor^{\vee}$, whose objects are spectral cooperads and whose $1$-morphisms are bicomodules.
% We show that Koszul duality gives rise to an equivalence $\MMor \simeq \MMor^{\vee}$ of $(\infty,2)$-categories.
\end{abstract}

\maketitle

\tableofcontents

\section{Introduction}

Koszul duality is a correspondence between operads and cooperads first introduced by Ginzburg--Kapranov \cite{GinzburgKapranov} and Getzler--Jones \cite{getzler1994operadshomotopyalgebraiterated} in the setting of chain complexes, and later extended to spectra by Ching \cite{ChingThesis} and Salvatore \cite{Salvatore-Thesis}.
The theory was worked out $\infty$-categorically and further generalized by Lurie in \cite[\S 5.2]{HA}.

The duality takes the form of an adjoint equivalence
\[
\begin{tikzcd}
B \colon \Op(\Sp) \ar[r, shift left] & \coOp(\Sp) \colon C \ar[l, shift left]
\end{tikzcd}
\]
between the $\infty$-categories of strongly positive\footnote{A (co)operad $\sO$ is strongly positive if $\sO(0)\simeq \ast$ and $\sO(1) \simeq \Sph$.} operads and cooperads in spectra; the fact that this is an equivalence of categories is due to Ching \cite{ChingBarCobar}.
For each pair $\sO, \sP \in \Op(\Sp)$, we also obtain a Koszul duality adjunction
\[
\begin{tikzcd}
B \colon \BMod_{(\sO, \sP)} \ar[r, shift left] & \BcoMod_{(B\sO, B\sP)} \colon C \ar[l, shift left] 
\end{tikzcd}
\]
between categories of bi(co)modules in the category $\SSeqnu(\Sp)$ of symmetric sequences in spectra concentrated in positive arities. 
This is an equivalence of categories by a theorem of Heuts \cite{heuts2024koszulduality}.
% The duality can be described as follows. Given a reduced\footnote{Reduced means that $\sO(0)\simeq \ast$ and $\sO(1) \simeq \Sph$.} operad $\sO$ in spectra, we can form the $2$-sided bar construction $B(\unit, \sO, \unit)$, given by the colimit of the usual simplicial diagram
%  \[
% \begin{tikzcd}
%       \cdots \unit \circ \sP \circ \sP \circ \unit \ar[r, shift left=2] \ar[r] \ar[r, shift right=2] & \unit \circ \sP \circ \unit \ar[l, shift left, shorten=0.4em] \ar[l, shift right, shorten=0.4em] \ar[r, shift left] \ar[r, shift right] & \unit \ar[l, shorten=0.4em]
% \end{tikzcd}
% \]
% in the $\infty$-category $\SSeq(\Sp)$ of symmetric sequences in spectra.
% The symbol $\circ$ denotes the composition product of symmetric sequences, and $\unit$ is the unit of this monoidal structure, given by the sphere spectrum $\Sph$ concentrated in arity $1$.
% It turns out that this bar construction can be equipped with the structure of a cooperad in spectra, which we denote by $B\sO$ and call the Koszul dual of $\sO$.
% Starting with cooperad $\sQ$, one can form the 

It is the purpose of this note to explain that all these equivalences can be assembled into a single equivalence of $(\infty, 2)$-categories.
More precisely, Blom \cite{blom2025straighteningfunctor} constructed a Morita $(\infty, 2)$-category $\MMor_{\Op}$ with objects given by strongly positive operads in spectra and $1$-morphisms given by bimodules.
If $M$ and $N$ are respectively an $(\sO, \sP)$- and a $(\sP, \sQ)$-bimodule, then their composition in $\MMor_{\Op}$ is given by the relative composition product $M \circ_\sP N$, which is defined as the colimit of the usual simplicial diagram
 \[
\begin{tikzcd}
      \cdots M \circ \sP \circ \sP \circ N \ar[r, shift left=2] \ar[r] \ar[r, shift right=2] & M \circ \sP \circ N \ar[l, shift left, shorten=0.4em] \ar[l, shift right, shorten=0.4em] \ar[r, shift left] \ar[r, shift right] & M \circ N. \ar[l, shorten=0.4em]
\end{tikzcd}
\]
Here $\circ$ denotes the composition product of symmetric sequences.
Similarly, one can also define a dual Morita $(\infty, 2)$-category $\MMor_{\coOp}$, which has cooperads as its objects and bicomodules for $1$-morphisms.
Composition in this category is given by the cobar construction of bicomodules.
The main theorem of this paper is:

\begin{introtheorem} \label{introthm: main-theorem}
    There is an equivalence of $(\infty, 2)$-categories $\MMor_{\Op} \simeq \MMor_{\mathrm{coOp}}$ that sends an operad $\sO$ to its Koszul dual cooperad $B\sO$, and is given on mapping categories by the Koszul duality functor
    \[
    B \colon \BMod_{(\sO, \sP)}(\SSeqnu(\Sp)) \xrightarrow{\sim} \BcoMod_{(B\sO, B\sP)}(\SSeqnu(\Sp)).
    \]
\end{introtheorem}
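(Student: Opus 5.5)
The plan is to realize both Morita categories as Morita categories of algebras in a single monoidal $(\infty,2)$-categorical setting, and to produce the equivalence from one monoidal functor. Blom's construction builds $\MMor_{\Op}$ from the monoidal $\infty$-category $(\SSeqnu(\Sp),\circ)$: objects are algebras, $1$-morphisms are bimodules, and composition is the relative tensor product (geometric realization). Dually, $\MMor_{\coOp}$ is the Morita category of coalgebras in the same monoidal category, with composition given by totalization, the cobar construction. Equivalently, it is the Morita category of algebras in $\SSeqnu(\Sp)^{\op}$, with $2$-morphisms reversed appropriately. So the first step is to rewrite both sides as $\MMor(\sC)$ for $\sC=(\SSeqnu(\Sp),\circ)$ and for $\sC^{\op}$, respectively. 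We will need to check that the relevant colimits, resp. limits, are compatible with $\circ$ only in the restricted sense required, namely for strongly positive (co)operads and positive-arity (co)modules.

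The key step is to construct a functor $\MMor_{\Op}\to\MMor_{\coOp}$ directly via bar constructions, rather than through a monoidal functor between the underlying categories; no such monoidal functor exists. We define it on objects by $\sO\mapsto B\sO=\unit\circ_\sO\unit$. On a bimodule $M$ we set $B(M)=\unit\circ_\sO M\circ_\sP\unit$, which carries a $(B\sO,B\sP)$-bicomodule structure. One clean route is to use the fact that $\unit$ is an $(\sO,\sO)$-bimodule-like object that is "invertible up to Koszul duality". That is, we view $\unit$ as a $1$-morphism from $\sO$ to $\unit$ in $\MMor_{\Op}$, and its right adjoint as a $1$-morphism back. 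Then $B$ is conjugation by these adjunctions: the comonad $\unit\circ_\sO\unit$ on the trivial operad is exactly $B\sO$, and $B(M)$ is a bicomodule over it. Formally, we would invoke a general statement that a Morita category $\MMor(\sC)$ maps to the category of "comonads on objects of the form $\unit$" via $\sO\mapsto(\unit\dashv\unit)$. This is Lurie's Barr–Beck-style identification of the comonad of an adjunction, applied functorially in the $(\infty,2)$-category; it produces an oplax functor $\MMor_{\Op}\to\MMor_{\coOp}$.

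Next we must show that this functor is an honest functor, i.e.\ that it respects composition. Concretely, the comparison map
\[B(M\circ_\sP N)\to B(M)\,\square_{B\sP}\,B(N)\]
must be an equivalence, with the cotensor product computed as the cobar totalization. This is where connectivity and strong positivity enter: in each arity only finitely many terms contribute. The totalization therefore commutes with the finite composition products, and the map reduces to the statement that $\unit\circ_\sP\unit\simeq B\sP$ is the Koszul dual, together with a Fubini interchange of realization and totalization. We expect this step to be the main obstacle. To attack it, we would reduce to the case of free bimodules $M=\sO\circ X\circ\sP$, where both sides become computable, $B(M)\simeq X$ cofree, and the map is visibly an equivalence. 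Then we would extend to all bimodules by resolving via bar resolutions. To control the relevant limits, we would use the arity-wise finiteness to show that the cobar side commutes with geometric realizations.

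Finally, we conclude with the standard criterion that a functor of $(\infty,2)$-categories is an equivalence iff it is essentially surjective on objects and an equivalence on mapping $\infty$-categories. Essential surjectivity is Ching's theorem that $B\colon\Op(\Sp)\to\coOp(\Sp)$ is an equivalence. On mapping categories the functor is identified with $B\colon\BMod_{(\sO,\sP)}\to\BcoMod_{(B\sO,B\sP)}$, which is an equivalence by Heuts' theorem. This yields the equivalence claimed in \cref{introthm: main-theorem}, with the stated behaviour on objects and mapping categories.
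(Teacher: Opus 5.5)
\textbf{Overall.} Your architecture matches the paper's, and your construction of the functor is a genuinely different but sound route. There is, however, a real gap in the free-bimodule computation, which is the key step.

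\textbf{Shared architecture.} Both proofs produce an oplax functor to $\pressymstc$ whose values on identities are strongly positive coalgebras. Both lift it through the universal property of the dual Morita category to a unital oplax functor $\MMor_+\to\coMMor_+$. Both show that the comparison maps $B(M\circ_\sP N)\to BM\Box_{B\sP}BN$ are equivalences by reducing to free bimodules; this reduction works because the cobar totalization is $(n-1)$-skeletal in arity $n$. Both finish with Ching and Heuts.

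\textbf{Where you differ: the construction.} The paper goes through Goodwillie calculus, $\MMor_+\xrightarrow{\reAlg}\diffalg\xrightarrow{\theta}\diffst\xrightarrow{\partial_*}\pressymstc$, where $\theta$ conjugates by the stabilization adjunctions $\Sigma^\infty_\sO\dashv\Omega^\infty_\sO$. You conjugate inside $\MMor_+$ itself, by the induction/restriction adjunction $L_\sO={}_{\unit}\unit_{\sO}\dashv R_\sO={}_{\sO}\unit_{\unit}$ along the augmentation $\sO\to\unit$. This is essentially the paper's construction transported along $\reAlg$: $\reAlg$ carries $L_\sO,R_\sO$ to $\Sigma^\infty_\sO,\Omega^\infty_\sO$, and $\partial_*\circ\reAlg\simeq\id$ on mapping categories.

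Your route avoids calculus and makes the effect on $1$-morphisms literally $M\mapsto L_\sP\circ M\circ R_\sO=\unit\circ_\sP M\circ_\sO\unit$. The price is that you must supply what the paper gets by citation:
\begin{itemize}
\item a coherent choice of the $L_\sO$ over the space of objects;
\item a ``conjugation by adjunctions is oplax'' result;
\item the lift through the universal property of $\coMMor_+$, which your sentence ``it produces an oplax functor $\MMor_{\Op}\to\MMor_{\coOp}$'' skips;
\item above all, an identification of the coalgebra $L_\sO R_\sO$ and the bicomodules $L_\sP M R_\sO$ with Ching's $B\sO$ and Heuts' $BM$ as structured objects. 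Without this you cannot invoke their equivalence theorems. The paper gets it from $\partial_*\Sigma^\infty_\sO\Omega^\infty_\sO\simeq B\sO$ and the compatibility of $\partial_*$ with $B$ on bicomodules.
\end{itemize}

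\textbf{The gap: the free case of the strongness step.} You write that for free $M$, ``$B(M)\simeq X$ cofree'' and the map is ``visibly an equivalence''. You also say the map reduces to ``$\unit\circ_\sP\unit\simeq B\sP$''. Both are off.
\begin{itemize}
\item Koszul duality sends the free bimodule $\sQ\circ X\circ\sP$ to the \emph{trivial} bicomodule $\triv(X)$. It is trivial bimodules that go to cofree bicomodules.
\item The distinction matters. Cotensoring cofree bicomodules over $B\sP$ leaves $B\sQ\circ X\circ B\sP\circ Y\circ B\sO$, with $B\sP$ in the middle. The source $B(M\circ_\sP N)\simeq\triv(X\circ\sP\circ Y)$ has $\sP$ there instead.
\item With the correct description, the target is $\triv(X)\Box_{B\sP}\triv(Y)\simeq\triv(X)\circ(\unit\Box_{B\sP}\unit)\circ\triv(Y)$. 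This uses that composing with $X$ and $Y$ commutes with the arity-wise finite totalization.
\item Matching this with the source requires $\unit\Box_{B\sP}\unit\simeq CB\sP\simeq\sP$, i.e.\ the cobar--bar half of Koszul duality for operads.
\end{itemize}
That identification, not the definition $B\sP=\unit\circ_\sP\unit$, is what makes the free case work, and it is missing from your plan. With it, your reduction goes through as in the paper. Separately, connectivity plays no role here; stability plus arity-wise finiteness is enough.
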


\begin{remark}
    In fact, we will prove a slightly more general result which applies not just to (co)operads in spectra but also to (co)algebras in the $\infty$-category of functor symmetric sequences $\SSeq(\sA, \sA)$, where $\sA$ is a stable presentable $\infty$-category.
\end{remark}

\subsection*{Acknowledgments}
The author wishes to thank Thomas Blom and Gijs Heuts for many insightful conversations. He was supported by the Royal Society through grant URF\textbackslash R1\textbackslash 211075 and is grateful to the University of Oxford for its hospitality. 

\subsection*{Notations and conventions}

\begin{enumerate}[(\arabic*)]
    \item We will write category for $(\infty, 1)$-category and $2$-category for $(\infty, 2)$-category.
    \item We will write $\Cat$ for the $2$-category of categories.
    \item If $\sX$ is a $2$-category and $x, y$ are objects of $\sX$, we write $\sX(x, y)$ for the category of $1$-morphisms from $x$ to $y$.
\end{enumerate}

\section{Morita categories}

Blom showed in \cite{blom2025straighteningfunctor} how to construct the Morita $2$-category $\MMor(\sX)$ for sufficiently well-behaved $2$-categories $\sX$. It is characterized by a universal property in terms of lax functors, as we will now explain.

% whose objects are pairs $(x, \sO)$, where $x$ is an object of $\sX$ and $\sO$ is an algebra in the monoidal endomorphism category $\sX(x, x)$.
% A $1$-morphism from $(x, 
% \sO)$ to $(y, \sP)$ in $\MMor(\sX)$ is given by a $(\sP, \sO)$-bimodules in the mapping category $\sX(x, y)$.
% % If $M$ is a morphism from $(y, \sP)$ to $(z, \sQ)$, and $N$ is a morphism from $(x, \sO)$ to $(y, \sP)$,
% If we are given morphisms
% \[
% M \in \MMor((y, \sP), (z, \sQ)) \quad \text{and} \quad N \in \MMor((x, \sO), (y, \sP))
% \]
% then their composition is given by the relative bar construction $M \circ_\sP N$, which is defined as the colimit of the usual simplicial diagram
% \[
%     \begin{tikzcd}
%           \cdots M \circ \sP \circ \sP \circ N \ar[r, shift left=2] \ar[r] \ar[r, shift right=2] & M \circ \sP \circ N \ar[l, shift left, shorten=0.4em] \ar[l, shift right, shorten=0.4em] \ar[r, shift left] \ar[r, shift right] & M \circ N, \ar[l, shorten=0.4em]
%    \end{tikzcd}
%     \]

A lax functor $F \colon \sX \to \sY$ between $2$-categories should be thought of as a functor that preserves units and composition up to a possibly non-invertible $2$-morphism.
Part of the data of such a functor consists of $2$-morphisms
\[
\eta_w \colon \id_F(w) \Rightarrow F(\id_w) \quad \text{and} \quad \mu_{f, g} \colon F(f) \circ F(g) \Rightarrow F(f \circ g)
\]
for every object $w$ and every pair of composable morphisms $x \xrightarrow{g} y \xrightarrow{f} z$ in $\sX$.
We call $\mu_{f, g}$ a lax comparison map of $F$.
A lax functor is called \emph{unital} if the $2$-morphisms $\eta_w$ are invertible for all objects $w$.
A unital lax functor is a (strong) functor of $2$-categories precisely if the $2$-morphisms $\mu_{f, g}$ are invertible for all $f$ and $g$.
There is also the notion of a (unital) oplax functor, for which the morphisms $\eta_w$ and $\mu_{f, g}$ point in the other direction.
We can define categories $\Lax(\sX, \sY)$ and $\Oplax(\sX, \sY)$ of lax and oplax functors from $\sX$ to $\sY$.
See \cite[\S A.3.2]{blansblom2025chainrulegoodwilliecalculus} for precise definitions.

% Up to some technical fine print, the Morita category of a $2$-category $\sY$ is a $2$-category $\MMor(\sY)$ together with a lax functor $\MMor(\sY) \to \sY$ characterized by the universal property that for every lax functor $F \colon \sX \to \sY$, there is an essentially unique unital lax functor $F^u \colon \sX \to \MMor(\sY)$ making the diagram
% \[
% \begin{tikzcd}
%     & \MMor(\sY) \ar[d] \\
%     \sX \ar[ur, "F^u"] \ar[r, "F"'] & \sY& 
% \end{tikzcd}
% \]
% commute.

\begin{remark}
    In order to correctly state the universal property of the Morita $2$-category, we recall that a \emph{$2$-precategory} is a Segal object
    \[
    \sX \colon \Delta^\op \to \Cat
    \]
    such that $\sX_0$ is a space.
    A $2$-precategory is a $2$-category precisely if the Segal object defining it is complete.
    By \cite[Proposition 7.16]{Haugseng2015RectificationEnrichedCategories}, the inclusion of $2$-precategories into $2$-categories admits a left adjoint, called completion.
    Almost all concepts defined for $2$-categories have a counterpart for $2$-precategories.
    See \cite[\S A.1]{blansblom2025chainrulegoodwilliecalculus} for more details.
\end{remark}

\begin{theorem}[{\cite[Proposition 4.4.6]{blansblom2025chainrulegoodwilliecalculus}}] \label{Thm: Morita-category-exists}
    Let $\sX$ be a $2$-precategory and suppose that:
    \begin{enumerate}[\upshape{(}\arabic*\upshape{)}]
        \item For any pair of objects $x, y \in \sX$ the mapping category $\sX(x, y)$ admits geometric realizations; 
        \item For any three objects $x, y, z \in \sX$, the composition functor
        \[
        \sX(y,z) \times \sX(x, y) \to \sX(x, z)
        \]
        preserves geometric realizations in both variables.
    \end{enumerate}
    Then there exists a $2$-precategory $\MMor^{\mathrm{pre}}(\sX)$ together with a lax functor $\MMor^{\mathrm{pre}}(\sX) \to \sX$ such that for every $2$-precategory $\sY$, postcomposition with this functor induces an equivalence of spaces
    \[
    \Laxunit(\sY, \MMor^{\mathrm{pre}}(\sX))^{\simeq} \xrightarrow{\sim} \Lax(\sY, \sX)^{\simeq}.
    \]
\end{theorem}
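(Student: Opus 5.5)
The construction I would use is the standard ``bimodules as lax functors'' one. First, model lax functors out of a $2$-precategory $\sY$ concretely. Following \cite[\S A.3.2]{blansblom2025chainrulegoodwilliecalculus}, a lax functor $\sY \to \sX$ is a map of (generalized) $\infty$-operads, or equivalently a map of cocartesian fibrations over $\Delta^\op$ that preserves cocartesian lifts of inert morphisms only. With this model, $\Lax(\sY,\sX)$ becomes a full subcategory of a functor category. The key observation is that lax functors out of the ``walking bimodule'' shapes recover algebras and bimodules:
\begin{itemize}
    \item a lax functor from the point $[0]$ (viewed as a $2$-precategory) is an object $x \in \sX$ together with an associative algebra in the monoidal category $\sX(x,x)$;
    \item a lax functor from $[1]$ is a pair of algebras $\sO,\sP$ together with an $(\sO,\sP)$-bimodule, and similarly for $[n]$.
\end{itemize}

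With this in hand, define $\MMor^{\mathrm{pre}}(\sX)$ as a Segal object $\Delta^\op \to \Cat$ as follows. Its $n$-th level is the full subcategory of $\Lax([n],\sX)$, taken with suitable (cartesian) maps, on those lax functors whose composite laxness is ``relative-tensor-product free''. Concretely, these are the lax functors for which each comparison map
\[
M_{ij}\circ_{\sP_j} M_{jk} \to M_{ik}
\]
is an equivalence. Hypotheses (1) and (2) are used exactly here. Hypothesis (1) guarantees that the relative bar construction $M \circ_\sP N$ exists as a geometric realization in $\sX(x,z)$. Hypothesis (2) guarantees that it is again an $(\sO,\sQ)$-bimodule and that relative tensor products are associative. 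From this one checks the Segal condition: restriction along the spine $[1]\cup_{[0]}\cdots\cup_{[0]}[1]\to[n]$ is an equivalence, with the inverse given by forming iterated relative tensor products, in the spirit of Lurie's construction of $\mathrm{Alg}(\mathcal{BM})$ in \cite[\S 4.4]{HA}. Moreover, $\MMor^{\mathrm{pre}}(\sX)_0$ is a space once we restrict to equivalences of algebras, so the result is indeed a $2$-precategory. The tautological lax functor $\MMor^{\mathrm{pre}}(\sX)\to\sX$ forgets the algebra structures and remembers only the underlying objects and bimodules; its lax structure maps are $M\circ N \to M\circ_\sP N$.

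For the universal property, take a unital lax functor $\sY \to \MMor^{\mathrm{pre}}(\sX)$ and compose it with the forgetful functor. Conversely, a lax functor $F\colon \sY\to\sX$ produces, for each $y$, the algebra $F(\id_y)$, and for each morphism $f$ a bimodule $F(f)$ over the algebras of its source and target; the lax maps $F(f)\circ F(g)\to F(fg)$ factor through $F(f)\circ_{F(\id)}F(g)$. This gives a unital lax functor to $\MMor^{\mathrm{pre}}$. To make this an equivalence of spaces rather than a pointwise construction, I would realize it as a Kan extension / left adjoint: a lax functor $\sY\to\sX$ is a map over $\Delta^\op$ of the unstraightened objects, and $\MMor^{\mathrm{pre}}(\sX)$ is built so that maps into it from $\sY$ correspond to maps into $\sX$ of the ``subdivided'' or ``envelope'' precategory of $\sY$. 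That reduces everything to a Yoneda-type statement on simplices $[n]$, which is the Segal computation above.

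The main obstacle I expect is the coherent Segal condition, namely that relative composition products assemble associatively up to all higher homotopies using only hypothesis (2). I would address it by reducing to Lurie's bimodule machinery in \cite[\S 4.4]{HA} applied to each triple of mapping categories. The uniqueness half of the universal property, showing that unitality of the lax functor into $\MMor^{\mathrm{pre}}$ pins down the algebra structures, should then follow formally.
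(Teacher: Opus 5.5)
Your construction of the $2$-precategory agrees with the one the paper relies on (recalled in the proof of \cref{prop: comor-plus-exists}). There, levels are given by ``composite'' lax functors out of $[n]$, the Segal condition comes from relative composition products via hypotheses (1) and (2), and restricting to invertible maps on the vertex algebras is the passage to $\mathrm{Hor}$. The gap is the universal property, which is the actual content of the theorem.

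You claim it reduces to ``a Yoneda-type statement on simplices $[n]$, which is the Segal computation above''. But the Segal condition only describes strict $n$-simplices of $\MMor^{\mathrm{pre}}(\sX)$. A unital lax functor into it carries non-invertible comparison $2$-cells that no simplex records. Already for $\sY=[2]$, a unital lax functor $[2]\to\MMor^{\mathrm{pre}}(\sX)$ consists of a $2$-simplex (algebras $A_0,A_1,A_2$, bimodules $M_{01},M_{12}$), a further bimodule $M_{02}$, and a bimodule map $M_{12}\circ_{A_1}M_{01}\to M_{02}$. A lax functor $[2]\to\sX$ instead has an $A_1$-balanced, $(A_2,A_0)$-linear map $M_{12}\circ M_{01}\to M_{02}$. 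Identifying the two is the universal property of the relative tensor product, not the Segal condition, and for general $\sY$ it must be done coherently.

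Your pointwise recipe (which is \cref{rem: unital-lift}) does not by itself give a map of spaces, let alone an inverse to postcomposition. You also never say what the ``envelope'' of $\sY$ is, or why lax and unital lax functors out of $\sY$ would be determined simplex by simplex. The ``uniqueness half'' is not formal for the same reason. Conversely, the coherence of relative tensor products that you single out as the main obstacle is the routine part (\cite[\S 4.4]{HA}).

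Your left-adjoint mechanism also cannot be an adjunction at the level of categories of lax functors. For $\sY=\ast$, $\Laxunit(\ast,\MMor^{\mathrm{pre}}(\sX))$ is the space $\MMor^{\mathrm{pre}}(\sX)_0$, while $\Lax(\ast,\sX)$ contains all algebra maps; this is the content of the Warning. For this reason the cited proof does not argue inside $2$-precategories.

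Blom works with double categories, where degree $0$ may be a category of algebras with all algebra maps. He first builds a simplicial object $\overline{\bMMor}(\mathbb{D})$ satisfying $\Laxunit(\mathbb{E},\overline{\bMMor}(\mathbb{D}))\simeq\Lax(\mathbb{E},\mathbb{D})$ as categories, for all double categories $\mathbb{E}$. This object is in general \emph{not} Segal, so one needs unital lax functors into arbitrary simplicial objects. Only then does he pass to a subobject $\bMMor(\mathbb{D})$, shown under (1) and (2) to be Segal and to retain the universal property; this is where your relative tensor product computation belongs. Finally he sets $\MMor^{\mathrm{pre}}(\sX)=\mathrm{Hor}(\bMMor(\sX))$. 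The equivalence $\Lax(\sY,\mathrm{Hor}(\mathbb{D}))^{\simeq}\simeq\Lax(\sY,\mathbb{D})^{\simeq}$ is exactly what yields the statement on groupoid cores.

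Your ``restrict to equivalences of algebras'' is this last step. The missing idea is that the universal property must be established before it, at the double-categorical level, via the non-Segal intermediate object.
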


\begin{warning}
    The universal property of the previous theorem does not hold for categories of (unital) lax functors, but only for their groupoid cores.
    To obtain a universal property that holds on the level of categories, one should work in the context of lax functors of double categories instead of $2$-categories.
\end{warning}

\begin{definition}
    Suppose $\sX$ is a $2$-category that satisfies the conditions of the previous theorem.
    We define the \emph{Morita $2$-category} $\MMor(\sX)$ of $\sX$ to be the completion of $\MMor^{\mathrm{pre}}(\sX)$.
\end{definition}

\begin{remark}
    The category $\MMor(\sX)$ can be described explicitly as follows.
    Its objects are pairs $(x, \sO)$, where $x$ is an object of $\sX$ and $\sO$ is an algebra in the monoidal endomorphism category $\sX(x, x)$.
    Mapping categories in $\MMor(\sX)$ can be identified with categories of bimodules:
    \[
    \MMor(\sX)((x, \sO), (y, \sP)) \simeq \BMod_{(\sP, \sO)}(\sX(x, y)).
    \]
    If we are given morphisms
    \[
    M \in \MMor((y, \sP), (z, \sQ)) \quad \text{and} \quad N \in \MMor((x, \sO), (y, \sP))
    \]
    then their composition is given by the relative composition product of bimodules
    \[
    M \circ_\sP N \coloneqq \colim \left(
        \begin{tikzcd}
              \cdots M \circ \sP \circ \sP \circ N \ar[r, shift left=2] \ar[r] \ar[r, shift right=2] & M \circ \sP \circ N \ar[l, shift left, shorten=0.4em] \ar[l, shift right, shorten=0.4em] \ar[r, shift left] \ar[r, shift right] & M \circ N \ar[l, shorten=0.4em]
       \end{tikzcd}
       \right).
    \]
\end{remark}

\begin{remark} \label{rem: unital-lift}
    Suppose that $F \colon \sY \to \sX$ is a lax functor.
    Then for each object $y \in \sY$, we obtain a lax monoidal functor $F \colon \sY(y, y) \to \sX(F(y), F(y))$.
    In particular, it sends the identity $\id_y$ to an algebra 
    \[
    F(\id_y) \in \Alg(\sX(F(y), F(y))).
    \]
    On mapping categories where the source is not equivalent to the target, $F$ induces a lax functor of bitensored categories.
    In particular, for every morphism $f \colon y \to z$, its image $F(f)$ acquires the structure of an $(F(\id_z), F(\id_y))$-bimodule in $\sX(F(y), F(z))$.

    The unital lax lift $F^{\mathrm{un}} \colon \sY \to \MMor(\sX)$ coming from the universal property of the Morita category sends $y$ to the pair $(y, F(\id_y))$ and $f \colon y \to z$ to the bimodule $F(f)$ as defined above.
    Given a pair of composable morphisms $x \xrightarrow{g} y \xrightarrow{f} z$ in $\sX$,
    % , we have $f \circ g = f \circ_{\id_y} g$, in other words their composite is equivalent to the relative composition product over $\id_y$.
    we define a morphism of bimodules by combining the assembly map for colimits with the lax structure of $F$:
    \[
    F(f) \circ_{F(\id_y)} F(g) \Rightarrow F(f \circ_{id_y} g) \simeq F(f \circ g).
    \]
    This is the lax comparison map of $F^{\mathrm{un}}$ for the pair $(f, g)$.
\end{remark}

In this article, we will only consider the Morita category of the $2$-category of stable presentable categories and symmetric sequences, which we will now define.

\begin{definition}
    Let $\sA$ and $\sB$ be stable presentable categories.
    The category of \emph{nonunital symmetric sequences} from $\sA$ to $\sB$ is defined as
    \[
    \SSeqnu(\sA, \sB) \coloneqq \FunL(\Sym^{\mathrm{nu}}(\sA), \sB),
    \]
    where $\Sym^{\mathrm{nu}}(\sA) \simeq \bigoplus_{n \geq 1} \sA^{\otimes n}_{h\Sigma_n}$ denotes the free non-unital commutative algebra on $\sA$ in $\presl$ with respect to the tensor product $\otimes$ of presentable categories.
\end{definition}

Since all our symmetric sequences are nonunital, we will usually drop this adjective.

\begin{remark}
    In concrete terms, a symmetric sequence $F \in \SSeqnu(\sA, \sB)$ consists of a sequence of functors
    \[
    F_n \colon \sA^{\times n}_{h\Sigma_n} \to \sB \qquad \text{for } n \geq 1,
    \]
    such that the underlying functor of each $F_n$ preserves all colimits in each variable separately.
    We call $F_n$ the arity $n$ component of $F$.
    One can also index these components by non-empty finite sets instead of natural numbers.
\end{remark}

\begin{definition}
    We say a symmetric sequence $F \in \SSeqnu(\sA, \sA)$ is \emph{strongly positive} if $F_1 \simeq \id_\sA$.
    We write $\SSeqsp(\sA, \sA)$ for the full subcategory spanned by the strongly positive symmetric sequences.
\end{definition}

\begin{remark}
    Given symmetric sequences $F \in \SSeqnu(\sB, \sC)$ and $G \in \SSeqnu(\sA, \sB)$, one can define their \emph{composition product} $F \circ G \in \SSeqnu(\sA, \sC)$ by the formula
    \[
    (F \circ G)_I = \bigoplus_{E \in \mathrm{Part}(I)}F_E \circ \{G_J\}_{J \in E},
    \]
    where $I$ is a non-empty finite set and $\mathrm{Part}(I)$ is the set of partitions of $I$.
    The symbol $\circ$ on the right hand side denotes composition of multivariable functors.
\end{remark}

Stable presentable categories and symmetric sequences between them can be assembled into a $2$-category.

\begin{proposition}[{\cite[\S 3.2]{blansblom2025chainrulegoodwilliecalculus}}]
    There is a $2$-category $\pressymstc$ such that:
    \begin{enumerate}[\upshape{(}\arabic*\upshape{)}]
        \item The objects of $\pressymstc$ are stable presentable categories;
        \item The category of $1$-morphisms from $\sA$ to $\sB$ in $\pressymstc$ is given by $\SSeqnu(\sA, \sB)$;
        \item Composition is given by the composition product.
    \end{enumerate}
    Moreover, the composition product commutes with sifted colimits in both variables.
\end{proposition}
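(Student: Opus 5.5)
Since the statement is cited from \cite[\S 3.2]{blansblom2025chainrulegoodwilliecalculus}, I would reconstruct the $2$-category $\pressymstc$ as a locally full sub-$2$-category of a $2$-category built from $\presl$. The observation is that a nonunital symmetric sequence $F \in \SSeqnu(\sA,\sB) = \FunL(\Sym^{\mathrm{nu}}(\sA),\sB)$ is the same as a left adjoint $\Sym^{\mathrm{nu}}(\sA) \to \Sym^{\mathrm{nu}}(\sB)$ of nonunital commutative coalgebras in $\presl$. Here $\Sym^{\mathrm{nu}}(\sB)$ carries its cofree-type (divided power / deconcatenation) coalgebra structure, and one uses that in the stable setting $\bigoplus_{n\ge1}(\sB^{\otimes n})_{h\Sigma_n} \simeq \bigoplus_{n \geq 1}(\sB^{\otimes n})^{h\Sigma_n}$-style norm identification is not needed: it suffices that coalgebra maps into $\Sym^{\mathrm{nu}}(\sB)$ from $\Sym^{\mathrm{nu}}(\sA)$ that are "conilpotent" are determined by their corestriction to $\sB$. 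Concretely, I would define $\pressymstc$ as the sub-$2$-category of $\coAlg^{\mathrm{nu}}(\presl)$ (or of the Kleisli $2$-category of the comonad $\Sym^{\mathrm{nu}}$ on $\presl$) spanned by the objects $\Sym^{\mathrm{nu}}(\sA)$ for $\sA$ stable presentable.

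The key steps, in order, are as follows.

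\textbf{Build the ambient $2$-category.} First, regard $\Sym^{\mathrm{nu}}$ as a comonad on the $2$-category $\presl_{\mathrm{St}}$, via the cofree/forgetful adjunction for nonunital cocommutative coalgebras restricted to stable objects. Then take its co-Kleisli $2$-category: the objects are those of $\presl_{\mathrm{St}}$, the morphism categories are $\FunL(\Sym^{\mathrm{nu}}\sA,\sB) = \SSeqnu(\sA,\sB)$, and the composite of $F$ and $G$ is $F\circ \Sym^{\mathrm{nu}}(G)\circ\Delta$. Co-Kleisli $2$-categories of comonads on $2$-categories exist formally, for instance as full sub-$2$-categories of coalgebras for the comonad, on cofree objects. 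This gives properties (1) and (2).

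\textbf{Identify composition with the composition product.} Second, expand $\Sym^{\mathrm{nu}}(G)\circ\Delta$ on $(\sA^{\otimes I})_{h\Sigma_I}$. The comultiplication splits $I$ into the blocks of a partition $E$, then applies $G_J$ to each block $J \in E$ and $F_E$ to the result. This gives the formula $\bigoplus_{E}F_E\circ\{G_J\}$, which verifies (3).

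\textbf{Sifted colimits.} Finally, composition preserves sifted colimits in both variables. In the first variable $F$, composition is precomposition with a fixed functor, so it preserves all colimits computed pointwise in $\FunL$. In the variable $G$, the formula involves $\bigotimes_{J\in E}G_J$, which is multilinear in the $G_J$; restricting along the diagonal, a multilinear functor preserves sifted colimits because the diagonal of a sifted category is cofinal. Orbits and direct sums commute with colimits.

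I expect the main obstacle to be the first step: making the comonad, and hence the co-Kleisli construction, fully coherent $(\infty,2)$-categorically, rather than merely on homotopy categories. What I would try first is to realize $\Sym^{\mathrm{nu}}$ as the cofree nonunital cocommutative coalgebra functor, using the adjunction of $2$-categories $\coAlg^{\mathrm{nu}}(\presl)\rightleftarrows\presl$. Because this adjunction is enriched, mapping categories in the full image on cofree objects automatically compute $\FunL(\Sym^{\mathrm{nu}}\sA,\sB)$. For this I would need the fact that $\Sym^{\mathrm{nu}}(\sB)$ is cofree among suitable (conilpotent) coalgebras, which should hold in the stable setting since $\Sym^{\mathrm{nu}}(\sB) \simeq \prod_{n\ge1}$ of its components in $\presl$.
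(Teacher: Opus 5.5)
\textbf{Gap: the comonad in your first step is never constructed.} The paper gives no argument here; it imports the construction from \cite[\S 3.2]{blansblom2025chainrulegoodwilliecalculus}. Your second and third steps (the partition formula and the sifted-colimit claim) are fine, provided a coherent comonad $(\Sym^{\mathrm{nu}},\delta,\epsilon)$ on stable presentable categories is available. Its co-Kleisli $2$-category then has mapping categories $\FunL(\Sym^{\mathrm{nu}}\sA,\sB)=\SSeqnu(\sA,\sB)$ and identities $\unit_\sA$. Unwinding $F\circ\Sym^{\mathrm{nu}}(G)\circ\delta$ gives the partition formula. Your sifted-colimit argument is also correct: composition is precomposition in $F$, and in $G$ you use multilinearity of $G\mapsto G^{\otimes k}$, cofinality of iterated diagonals, then orbits and sums.

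The problem is the first step, which carries essentially all the content, and which you do not carry out. Your route needs $\Sym^{\mathrm{nu}}(\sB)$ to be cofree, and this is neither formulated nor proved.
\begin{itemize}
\item For \emph{all} nonunital cocommutative coalgebras in $\presl$, nothing you say shows that the forgetful functor has a right adjoint, let alone that it is $\Sym^{\mathrm{nu}}$. Classically, the symmetric coalgebra is cofree only among conilpotent coalgebras.
\item For ``conilpotent'' coalgebras, you never define the class in $\presl$, and ``conilpotent maps'' has no meaning. A coherent $\infty$-categorical cofreeness statement of this kind is a real theorem, not a formality; compare the divided-power subtleties for $\mathbb{E}_\infty$-coalgebras in spectra.
\item Your justification, $\Sym^{\mathrm{nu}}(\sB)\simeq\prod_{n\geq 1}(\sB^{\otimes n})^{h\Sigma_n}$, holds in $\presl$ for every presentable $\sB$, stable or not, since small sums are products and orbits are fixed points there. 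It says nothing about maps compatible with comultiplications.
\end{itemize}
So the coherent composition law is assumed rather than constructed.

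\textbf{A possible fix.} You can avoid cofreeness altogether, since a co-Kleisli $2$-category only needs a comonad. The free nonunital commutative algebra monad $(\Sym^{\mathrm{nu}},\mu,\eta)$ on $\presl$ has structure maps whose right adjoints again preserve colimits:
\begin{itemize}
\item $\eta^R$ is projection onto arity $1$.
\item $\mu^R$ is assembled from summand projections and the right adjoints of the canonical maps $\sC_{hH}\to\sC_{hG}$ for finite $H\leq G$. Under the identification $\sC_{hG}\simeq\sC^{hG}$ in $\presl$, these right adjoints are restriction functors.
\end{itemize}
The Beck--Chevalley maps expressing naturality in $\sA$ should be invertible, since restriction commutes with $f^{\otimes n}$. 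The calculus of mates should then turn the monad into a comonad $(\Sym^{\mathrm{nu}},\mu^R,\eta^R)$ that preserves stable objects. Its co-Kleisli $2$-category then has the required properties: $\mu^R$ splits $(\sA^{\otimes I})_{h\Sigma_I}$ over partitions of $I$ exactly as in your second step, and your third step applies verbatim.
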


If $\sA \in \pressymstc$, we write $\unit_\sA$, or just $\unit$ if no confusion can arise, for the identity symmetric sequence of $\sA$.
It is given by the functor $\id_\sA$ in arity $1$ and the zero functor in all other arities.

\begin{remark}
    The previous proposition implies that for any stable presentable category $\sA$, the composition product defines a monoidal structure on $\SSeqnu(\sA, \sA)$.
    In the case $\sA = \Sp$, we have an equivalence
    \[
    \SSeqnu(\Sp, \Sp) \simeq \Fun(\Fin^\simeq_{\geq 1}, \Sp),
    \]
    where $\Fin^\simeq_{\geq 1}$ denotes the category of non-empty finite sets and bijections.
    In other words,\ $\SSeqnu(\Sp, \Sp)$ is the ordinary category of symmetric sequences in $\Sp$ concentrated in positive arities.
    In this case, the composition product agrees with the ordinary composition product of symmetric sequences, as defined in \cite[\S 4.1.2]{brantnerThesis}; see \cite[Proposition 3.2.16]{blansblom2025chainrulegoodwilliecalculus} for a comparison of these monoidal structures.
    In particular, an algebra in $\SSeqnu(\Sp)$ is the same as a non-unital operad in spectra.
\end{remark}

\begin{definition}
    Let $\sA$ be a stable presentable category.
    We call an algebra or coalgebra in the monoidal category $\SSeqnu(\sA, \sA)$ \emph{strongly positive} if its underlying symmetric sequence is.
\end{definition}

It follows from the previous proposition that the Morita category of $\pressymstc$ is well-defined.

\begin{definition}
    Let $\MMor_+$ denote the full subcategory of $\MMor(\pressymstc)$ spanned by the pairs $(\sA, \sO)$ where $\sO$ is strongly positive.
\end{definition}

We would also like to define the \emph{dual Morita category} of $\pressymstc$, which should have pairs $(\sA, \sQ)$ with $\sQ$ a coalgebra in $\SSeqnu(\sA, \sA)$ as its objects, and bicomodules as its morphisms.
If $M$ and $N$ are respectively a $(\sQ, \mathscr{R})$- and $(\mathscr{R}, \mathscr{S})$-bicomodule, then their composition in the dual Morita category should be given by the cobar construction
    \[
    M \Box_{\mathscr{R}} N \coloneqq \lim\left(
    \begin{tikzcd}
    M \circ N \ar[r, shift left] \ar[r, shift right] & M \circ \mathscr{R} \circ  N \ar[l, shorten=0.4em] \ar[r, shift left=2] \ar[r] \ar[r, shift right=2] & M \circ \mathscr{R} \circ \mathscr{R} \circ N \cdots \ar[l, shift left, shorten=0.4em] \ar[l, shift right, shorten=0.4em]
\end{tikzcd}
    \right).
    \]

In general, composition in $\pressymstc$ does not commute with cobar constructions of bicomodules, so that we can't directly define its dual Morita category. However, the following proposition shows that composition does commute with cobar constructions if we restrict to strongly positive coalgebras.

Recall that given a 2-(pre)category $\sX$, we can define a new 2-(pre)category $\sX^{\co}$ which has the same objects as $\sX$, but the direction of the $2$-morphisms is turned around:
\[
\sX^{\co}(x, y) \simeq \sX(x,y)^{\op}.
\]
One could reasonably try to define the dual Morita category as the completion of $\MMor^{\mathrm{pre}}((\pressymstc)^{\co})^\co$, but this runs into the problem that composition in $\pressymstc$ does not preserve totalizations in both variables, so that $(\pressymstc)^\co$ does not satisfy the requirements of \cref{Thm: Morita-category-exists}.
An inspection of the construction in \cite[\S 13]{blom2025straighteningfunctor} shows that it would suffice for composition in $\pressymstc$ to only preserve cobar constructions of the form $M \Box_{\mathscr{R}} N$ in both variables, but this condition is still not satisfied.
However, such totalizations are preserved by composition if we assume that $\mathscr{R}$ is strongly positive:

\begin{proposition} \label{prop: cobar-finite-totalization}
    Let $\sA$, $\sB$, and $\sC$ be stable presentable categories, and let $\sQ$, $\sR$, and $\sS$ be strongly positive coalgebras in the categories $\SSeqnu(\sA,\sA)$, $\SSeqnu(\sB, \sB)$ and $\SSeqnu(\sC, \sC)$ respectively.
    Suppose we are given bicomodules
    \[
    M \in \BcoMod_{(\sS, \sR)}(\SSeqnu(\sB, \sC)) \quad \text{and} \quad N \in \BcoMod_{(\sR, \sQ)}(\SSeqnu(\sA, \sB)).
    \]
    Then the limit defining the relative cobar construction $M \Box_{\sR} N$ is an $(n-1)$-skeletal totalization in arity $n$ for every $n \geq 1$.
    It follows that for every stable presentable category $\sD$, both composition functors
    \[
    \SSeqnu(\sA, \sC) \times \SSeqnu(\sD, \sA) \to \SSeqnu(\sD, \sC) \quad \text{and} \quad \SSeqnu(\sC, \sD) \times \SSeqnu(\sA, \sC)  \to \SSeqnu(\sA, \sD)
    \]
    preserve the limit defining the relative cobar construction $M \Box_{\sR} N$.
\end{proposition}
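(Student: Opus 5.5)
The approach is to use that $\sR$ is strongly positive to split off its arity-one part, $\sR \simeq \unit \oplus \bar\sR$ with $\bar\sR$ concentrated in arities $\geq 2$. Then I show that the cosimplicial object $X^\bullet = M \circ \sR^{\circ \bullet} \circ N$ is, arity by arity, a right Kan extension of its restriction to $\Delta_{\leq n-1}$.

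The first step is to make the coaugmentation structure precise. The counit $\sR \to \unit$ is an equivalence in arity $1$, since it is a counital coalgebra map on $\sR_1 \simeq \id$. Hence its fiber $\bar\sR$ is concentrated in arities $\geq 2$. Using the standard normalization for cosimplicial objects, the codegeneracies of $X^\bullet$ are induced by the counit. So the $k$-th matching object (equivalently, the $k$-th layer of the totalization tower $\mathrm{Tot}_k \to \mathrm{Tot}_{k-1}$) is, up to a shift, the term
\[
M \circ \bar\sR^{\circ k} \circ N
\]
viewed as a direct summand of $M \circ \sR^{\circ k}\circ N$. I will make this rigorous by the usual Dold--Kan-type argument in the stable setting. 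The key input is that $\circ$ is additive in the middle variable after fixing the outer ones, and that is exactly what the formula $(F\circ G)_I = \bigoplus_{E} F_E \circ \{G_J\}$ makes checkable: $\circ$ is multilinear in each inner slot, though not in the outer one. I expect this to be the main obstacle. The composition product is not additive in the left variable, so a naive ``normalized cochain'' decomposition $\sR^{\circ k} \simeq \bigoplus$ (words in $\unit,\bar\sR$) must be handled carefully. I would first try to prove it by induction on $k$, expanding $\sR \circ Y \simeq (\unit\oplus\bar\sR)\circ Y$ by the arity formula and isolating the cross terms as degenerate pieces.

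The second step is the arity count. A symmetric sequence $\bar\sR^{\circ k}$ vanishes in arities $< 2^k$, and in particular in arities $\leq k$. Composing with $M$ on the left and $N$ on the right, using that all sequences are nonunital (arity $\geq 1$), the arity of $M\circ\bar\sR^{\circ k}\circ N$ is bounded below by that of $\bar\sR^{\circ k}$, because each $E$ in the partition formula needs blocks of size $\geq 1$. Hence the $k$-th layer vanishes in arity $n$ whenever $k \geq n$, so $\mathrm{Tot}$ agrees with $\mathrm{Tot}_{n-1}$ in arity $n$. This proves the first claim. Since evaluation at arity $n$ computes limits pointwise, the totalization in arity $n$ is a finite limit.

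For the consequence, I use that arity $n$ of a composite $F \circ X$ or $X \circ G$ only depends on the arities $\leq n$ of $X$, by the partition formula. So to compute $(F\circ \lim X^\bullet)_n$ one may replace $X^\bullet$ by its $(n-1)$-skeletal truncation in arities $\leq n$, making the limit a finite limit. Finite limits are finite colimits in a stable category. Each $F_E$ and each $G_J$ preserves colimits in each variable, and the composition product is an (infinite) direct sum of multi-exact functors in each entry, where direct sums in stable categories preserve finite limits. It follows that both composition functors preserve the limit, which gives the result.
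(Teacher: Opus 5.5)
Your two-step structure (arity-wise skeletality, then preservation under $\circ$) matches the paper's proof, which simply cites Propositions 3.2.12 and 4.2.5 of Blans--Blom. However, both steps have a gap as you carry them out.

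\emph{Skeletality.} You identify the normalized pieces of $X^\bullet=M\circ\sR^{\circ\bullet}\circ N$ with $M\circ\bar\sR^{\circ k}\circ N$, and this identification is false. You have the linearity backwards: $F\circ G$ is additive in $F$ and polynomial in $G$, so $M\circ(-)\circ N$ is not additive in the middle.

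Already for $k=1$ we have $\sR\circ N\simeq N\oplus\bar\sR\circ N$. The fiber of $s^0\colon M\circ(N\oplus\bar\sR\circ N)\to M\circ N$ then contains mixed summands such as $M_2(N_1,\bar\sR_2(N_1,N_1))$ in arity $3$. These are killed by the codegeneracy, so they are nondegenerate, and your plan of isolating cross terms as degenerate pieces cannot work. Your formula would even make the totalization $\lfloor\log_2 n\rfloor$-skeletal in arity $n$, which fails: $M_1(\sR_2(N_1,\sR_2(N_1,N_1)))$ is a generally nonzero normalized term in arity $3$ and degree $2$.

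The correct bookkeeping runs as follows.
\begin{itemize}
\item Expand $(M\circ\sR^{\circ k}\circ N)_I$ as a sum over chains $\pi_1\geq\dots\geq\pi_{k+1}$ of partitions of $I$. The $i$-th copy of $\sR$ contributes $\sR_1\simeq\id$ on the blocks of $\pi_i$ that are not split in $\pi_{i+1}$.
\item Applying the counit to the $i$-th copy kills a summand unless $\pi_i=\pi_{i+1}$. Otherwise it maps the summand isomorphically onto the summand of the chain with that repetition deleted.
\item Hence the codegeneracy cube splits, and the fiber of the matching map is the sum over \emph{strict} chains. These are the trees in which every $\sR$-level has at least one (not every) vertex of arity $\geq 2$.
\item Each level raises the number of blocks by at least one, so this sum vanishes in arity $n$ for $k\geq n$.
\end{itemize}

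\emph{Preservation.} For $(-)\circ G$ your argument is fine, since that functor is exact. For $F\circ(-)$ it does not work. In $(F\circ X)_n=\bigoplus_E F_E(\{X_J\}_{J\in E})$ the variable $X$ enters several slots of a multilinear functor at once; for example $X\mapsto F_2(X_1,X_1)$ does not preserve fibers. So multi-exactness ``in each entry'' does not give preservation of finite limits.

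The missing idea is that the limit in question is a totalization. Since $\Delta^{\mathrm{op}}$ is sifted, the diagonal $\Delta\to\Delta^{\times m}$ is initial. Therefore $\mathrm{Tot}\,F_E(X^\bullet_{J_1},\dots,X^\bullet_{J_m})$ is an iterated totalization in separate variables. In each variable separately, $F_E$ is exact and the diagram is $(n-1)$-skeletal in arities $\leq n$, so the totalizations can be moved inside one at a time. This is the standard fact that polynomial functors of finite degree between stable categories preserve finite totalizations.

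With these two repairs your outline proves the proposition.
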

\begin{proof}
    This follows from \cite[Propositions 3.2.12 and 4.2.5]{blansblom2025chainrulegoodwilliecalculus}.
\end{proof}

This has the following consequence:

\begin{proposition} \label{prop: comor-plus-exists}
    There exists a $2$-precategory $\coMMor_+^{\mathrm{pre}}$ together with an oplax functor $U \colon \coMMor_+^{\mathrm{pre}} \to \pressymstc$ such that for every $2$-precategory $\sY$, composition with $U$ induces an equivalence of spaces
    \[
    \Oplaxunit(\sY, \coMMor_+^{\mathrm{pre}})^{\simeq} \xrightarrow{\sim} \Oplax_+(\sY, \pressymstc)^{\simeq}.
    \]
    Here $\Oplax_+(\sY, \pressymstc) \subseteq \Oplax(\sY, \pressymstc)$ is the full subcategory on those oplax functors $F \colon \sY \to \pressymstc$ for which the coalgebra $F(\id_y)$ is strongly positive for all $y \in \sY$.
\end{proposition}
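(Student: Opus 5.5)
The plan is to obtain $\coMMor_+^{\mathrm{pre}}$ by dualizing Blom's Morita construction, i.e.\ by running the construction behind \cref{Thm: Morita-category-exists} on $(\pressymstc)^{\co}$, but restricted to strongly positive algebras. Set $\sX \coloneqq (\pressymstc)^{\co}$. Under the identification $\sX(x,y) \simeq \SSeqnu(x,y)^{\op}$, we have the following dictionary: algebras in $\sX(x,x)$ are coalgebras in $\SSeqnu(x,x)$, bimodules are bicomodules, and geometric realizations are totalizations. Moreover, lax functors $\sY^{\co} \to \sX$ correspond to oplax functors $\sY \to \pressymstc$, since $(-)^{\co}$ reverses $2$-cells and hence exchanges lax and oplax structure maps while fixing unitality. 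Once we have a $2$-precategory $\MMor^{\mathrm{pre}}_+(\sX)$ with a lax functor to $\sX$ satisfying the analogue of the universal property of \cref{Thm: Morita-category-exists} for lax functors with strongly positive unit algebras, we set $\coMMor_+^{\mathrm{pre}} \coloneqq \MMor^{\mathrm{pre}}_+(\sX)^{\co}$ and $U$ the induced oplax functor. The universal property then follows from the chain
\[
\Oplaxunit(\sY, \coMMor_+^{\mathrm{pre}})^{\simeq} \simeq \Laxunit(\sY^{\co}, \MMor^{\mathrm{pre}}_+(\sX))^{\simeq} \simeq \Lax_+(\sY^{\co}, \sX)^{\simeq} \simeq \Oplax_+(\sY,\pressymstc)^{\simeq}.
\]

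The substantive step is constructing $\MMor^{\mathrm{pre}}_+(\sX)$. Following the remark before \cref{prop: cobar-finite-totalization}, I would go through Blom's construction in \cite[\S 13]{blom2025straighteningfunctor} and check where hypotheses (1) and (2) of \cref{Thm: Morita-category-exists} are used. The expected answer is that they are used only to form the relative tensor products $M \circ_{\sP} N$ as bar constructions, to know that composition preserves these, and to verify associativity and unitality of the relative tensor product (e.g.\ $\sP \circ_\sP N \simeq N$ via the split augmented simplicial object).

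In our setting the existence of the limits $M \Box_\sR N$ is automatic: each arity is an $(n-1)$-skeletal totalization, hence a finite limit in a stable presentable category. Preservation in each variable by composition with arbitrary symmetric sequences is exactly \cref{prop: cobar-finite-totalization}. I would therefore carry out Blom's construction with objects restricted to pairs $(\sA,\sQ)$ with $\sQ$ strongly positive. Every relative cobar construction that occurs is then over a strongly positive coalgebra, so the restricted hypotheses suffice at each step. The iterated cobar constructions needed for associativity, such as $(L \Box_\sR M)\Box_\sS N$, are handled because each is again of the form covered by the proposition, applied one variable at a time.

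For the universal property I would restrict the proof of \cite[Proposition 4.4.6]{blansblom2025chainrulegoodwilliecalculus} along the full subcategory $\Lax_+ \subseteq \Lax$. A lax functor lands in this subcategory exactly when its unital lift lands in the strongly positive objects, since by \cref{rem: unital-lift} the lift sends $y$ to $(F(y),F(\id_y))$. So the equivalence restricts to the corresponding components of the groupoid cores.

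The main obstacle is the auditing step. One must confirm that Blom's construction never needs totalizations beyond relative cobar constructions over strongly positive coalgebras, for instance in forming free (co)modules or in the degeneracy and coherence data of the Segal object. If some step requires more general limits, I would first try replacing the relevant diagram by one that becomes finite arity-wise after restricting to strongly positive coalgebras. This would use the same nilpotence argument as \cite[Proposition 4.2.5]{blansblom2025chainrulegoodwilliecalculus}.
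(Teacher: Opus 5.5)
Your proposal is correct and follows the paper's own argument. You dualize via $(-)^{\co}$, restrict Blom's construction to strongly positive coalgebras, and use \cref{prop: cobar-finite-totalization} to recover the Segal condition by auditing the proof of Blom's Proposition 13.10. You then rerun his \S 14 universal-property argument and transport along $(-)^{\co}$, which exchanges lax and oplax. The only difference is bookkeeping. The paper works at the double-categorical level: it defines $\bMMor_+((\pressymstc)^{\co})$ as the maximal subobject of Blom's (not necessarily Segal) simplicial object $\bMMor((\pressymstc)^{\co})$ with strongly positive $0$-simplices, and then passes to $\mathrm{Hor}(-)$. This makes your step of ``restricting along $\Lax_+ \subseteq \Lax$'' precise, since no ambient $2$-precategory $\MMor^{\mathrm{pre}}((\pressymstc)^{\co})$ exists to restrict from.
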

\begin{proof}
    We will need to explain some details of the construction of the Morita double category from \cite{blom2025straighteningfunctor}.
    Given a double category $\mathbb{D}$, which is by definition a Segal object
    \[
    \mathbb{D} \colon \Delta^\op \to \Cat,
    \]
     Blom constructs a new simplicial object $\overline{\bMMor}(\mathbb{D}) \colon \Delta^\op \to \Cat$ that satisfies the universal property
    \[
    \Lax^{\mathrm{un}}(\mathbb{E}, \overline{\bMMor}(\mathbb{D})) \simeq \Lax(\mathbb{E}, \mathbb{D})
    \]
    for double categories $\mathbb{E}$ \cite[\S 11]{blom2025straighteningfunctor}.
    For this it is important that (unital) lax functors can be defined for all simplicial objects \cite[Remark 2.14]{blom2025straighteningfunctor}, since $\overline{\bMMor}(\mathbb{D})$ is in general not a Segal object.
    Blom then defines $\bMMor(\mathbb{D})$ as a certain subobject of $\overline{\bMMor}(\mathbb{D})$.
    In case $\mathbb{D}$ satisfies (analogues of) the conditions of \cref{Thm: Morita-category-exists}, he shows that this defines a Segal object satisfying the required universal property.
    If $\sX$ is a $2$-precategory satisfying these conditions, we can regard it as a double category and define $\MMor^{\mathrm{pre}}(\sX)$ as $\mathrm{Hor}(\bMMor(\sX))$, where $\mathrm{Hor}$ denotes the right adjoint to the inclusion of $2$-precategories into double categories.
    In general, objects of $\bMMor(\mathbb{D})$ are pairs of an object $x$ in $\mathbb{D}$ together with an algebra in the endomorphism category $\mathbb{D}(x, x)$.

    We define $\bMMor_+((\pressymstc)^{\mathrm{co}})$ to be the maximal subobject of the simplicial object $\bMMor((\pressymstc)^{\mathrm{co}})$ such that in degree $0$ it is the full subcategory spanned by the objects $(\sA, \sQ)$, where $\sQ$ is a strongly positive coalgebra in $\SSeqnu(\sA, \sA)$.
    By inspecting the proof of \cite[Proposition 13.10]{blom2025straighteningfunctor} and using \cref{prop: cobar-finite-totalization}, we find that its conclusion equally well applies to $\bMMor_+((\pressymstc)^{\mathrm{co}})$, so that this simplicial object satisfies the Segal condition and therefore defines a double category.
    Moreover, the proof given in \cite[\S 14]{blom2025straighteningfunctor} shows that for every double category $\mathbb{E}$, we have an equivalence of categories
    \[
    \Oplaxunit(\mathbb{E}, \bMMor_+((\pressymstc)^{\co})^\co) \simeq \Oplax_+(\mathbb{E}, \pressymstc).
    \]
    Since we have an equivalence of spaces
    \[
    \Oplax(\sX, \mathrm{Hor}(\mathbb{D}))^\simeq \simeq \Oplax(\sX, \mathbb{D})^\simeq
    \]
    for every $2$-precategory $\sX$ and double category $\mathbb{D}$, we find that 
    \[
    \coMMor^{\mathrm{pre}}_+ \coloneqq \mathrm{Hor}(\bMMor_+((\pressymstc)^{\co})^\co)
    \]
    satisfies the required universal property.
\end{proof}

\begin{definition}
    We define $\coMMor_+$ to be the completion of the $2$-precategory $\coMMor_+^{\mathrm{pre}}$.
\end{definition}

\begin{remark}
    The objects of the $2$-category $\coMMor_+$ are pairs $(\sA, \sQ)$ where $\sA$ is a stable presentable category and $\sQ$ is a strongly positive coalgebra in $\SSeqnu(\sA, \sA)$.
    Morphisms are given by bicomodules, which are not required to have underlying strongly positive symmetric sequence.
    Composition is given by the cobar construction.
\end{remark}

\begin{remark}
    If $\sY$ is a $2$-category together with an oplax functor $F \colon \sY \to \pressymstc$ for which $F(\id_y)$ is strongly positive for all $y \in \sY$, then we obtain an oplax unital functor $F^{\mathrm{un}} \colon \sY \to \coMMor_+$ as the composite
    \[
    \sY \to \coMMor_+^{\mathrm{pre}} \to \coMMor_+,
    \]
    where the first functor comes from the universal property of \cref{prop: comor-plus-exists} and the second functor is the canonical map of $\coMMor_+^{\mathrm{pre}}$ to its completion.
    The effect of $F^{\mathrm{un}}$ on objects and $1$-morphisms as well as the description of its oplax comparison maps are dual to those given in \cref{rem: unital-lift}.
\end{remark}

\section{Koszul duality}

We now recall the form of Koszul duality that we will make use of.
The following theorem deals with Koszul duality between algebras and coalgebras in symmetric sequences, and was first proved by Ching \cite[Theorem 2.15]{ChingBarCobar} for operads and cooperads in spectra.

\begin{theorem}[{\cite[Theorem 4.2.4]{blansblom2025chainrulegoodwilliecalculus}}] \label{thm:koszul-duality-operads}
    Let $\sA$ be a stable presentable category. 
    There is an adjoint equivalence of categories
    \[
    \begin{tikzcd}
        \Alg(\SSeq_+(\sA, \sA)) \ar[r, shift left, "B"] & \coAlg(\SSeq_+(\sA, \sA)). \ar[l, shift left, "C"]
    \end{tikzcd}
    \]
    On underlying symmetric sequences, we have $B\sO \simeq \unit \circ_\sO \unit$ and $C\sQ \simeq \unit \Box_\sQ \unit$.
\end{theorem}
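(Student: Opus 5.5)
The plan is to follow the strategy of Lurie's Koszul duality for augmented algebras \cite[\S 5.2]{HA}, specialised to the monoidal category $\SSeq_+(\sA,\sA)$, and then check that unit and counit are equivalences one arity at a time. Every strongly positive algebra $\sO$ is canonically augmented over $\unit$, since $\sO_1\simeq\id_\sA$ and $\unit$ is concentrated in arity $1$; dually, every strongly positive coalgebra is coaugmented.

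\textbf{Step 1: construct $B$ and $C$.} The unit $\unit$ is an $(\unit,\sO)$- and an $(\sO,\unit)$-bimodule. Using the general machinery of \cite[\S 5.2.2]{HA}, or equivalently the lax functor formalism of the Morita category, the relative composition product $\unit\circ_\sO\unit$ inherits a coalgebra structure; this defines $B$. Dually, I define $C\sQ=\unit\Box_\sQ\unit$. This object is well defined and carries an algebra structure because, by \cref{prop: cobar-finite-totalization}, the defining totalization is $(n-1)$-skeletal in arity $n$. Hence composition preserves it in each variable, which is exactly what the dual argument needs.

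\textbf{Step 2: produce the adjunction.} I identify both $\mathrm{Map}_{\coAlg}(B\sO,\sQ)$ and $\mathrm{Map}_{\Alg}(\sO,C\sQ)$ with the space of ``twisting morphisms'', that is, compatible maps in the pairing of \cite[Construction 5.2.2.x]{HA}, $\sO\otimes\sQ\to\unit$ type data. This uses that $\circ$ preserves geometric realizations in each variable, and preserves the relevant finite totalizations by \cref{prop: cobar-finite-totalization}.

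\textbf{Step 3: show the unit and counit are equivalences, by induction on arity.} For an algebra $\sO$, consider its arity truncations $\sO_{\le n}$. Since $\circ$ in arity $n$ only involves components of arity $\le n$, each map $\sO_{\le n}\to\sO_{\le n-1}$ is a square-zero extension by a trivial algebra $\mathrm{triv}(X)$ with $X$ concentrated in arity $n$. Both $B$ and $C$ commute with these truncations, because arity $n$ of $B\sO$ depends only on $\sO_{\le n}$. By stability and the finite-skeletality above, $B$ and $C$ also preserve the pullback squares exhibiting the square-zero extensions. This reduces the problem to the trivial case. There a direct computation of the bar complex in arity $n$ gives $B\,\mathrm{triv}(X)\simeq\mathrm{triv}^{\vee}(\Sigma X)$ in the new arity, and dually $C\,\mathrm{triv}^\vee(Y)\simeq\mathrm{triv}(\Sigma^{-1}Y)$. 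The unit map is then visibly an equivalence. Since limits in symmetric sequences are computed aritywise and each arity stabilises after finitely many steps, the unit is an equivalence for all $\sO$. The counit is handled symmetrically using the coradical (arity) filtration of coalgebras.

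\textbf{Main obstacle.} I expect the hardest part to be the coherent construction of the adjunction in Step 2, together with verifying that the square-zero extension decomposition is respected compatibly by $B$ and $C$ in Step 3. In particular, one must show that truncation commutes with $B$ as coalgebras, not merely on underlying objects. I would attempt this by working in the $\infty$-category of arity-$\le n$ truncated symmetric sequences, which inherits a monoidal structure for which truncation is monoidal.
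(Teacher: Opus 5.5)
The paper does not prove this statement; it quotes it from \cite[Theorem 4.2.4]{blansblom2025chainrulegoodwilliecalculus}, which extends Ching's theorem for spectra. So your argument has to stand on its own. Steps 1 and 2 are a reasonable outline. (One correction: in Lurie's twisted-arrow formalism the relevant data are maps from the coalgebra to the algebra, not pairings $\sO\otimes\sQ\to\unit$.) The real problem is that the induction in Step 3 does not close.

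Work in $\SSeq_{\le n}(\sA,\sA)$, as you propose. Write $\tau$ for truncation to arities $\le n-1$ and $\iota$ for extension by zero.

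\textbf{Where the induction fails.} The square exhibiting $\sO_{\le n}$ as a square-zero extension has corners $\sO_{\le n}$, $\iota\tau\sO$, $\triv(\Sigma X)$ and $\unit$. To conclude anything about $\sO_{\le n}$, you therefore need the unit to be an equivalence for $\iota\tau\sO$ \emph{in arity $n$}. Your inductive hypothesis does not provide this. Truncation commutes with bar and cobar ($\tau B\simeq B\tau$), but extension by zero does not ($B\iota\not\simeq\iota B$).

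\textbf{A concrete instance.} Take $n=3$ and $\sP$ concentrated in arities $1$ and $2$. Then $B(\iota\sP)(3)$ is $\Sigma^2$ of the two-vertex trees on $\sP(2)$. Since $\iota\sP(3)=0$, the theorem asserts $CB(\iota\sP)(3)\simeq 0$. This says exactly that the reduced comultiplication of $B(\iota\sP)$, from arity $3$ to the two-vertex trees on $B(\iota\sP)(2)\simeq\Sigma\sP(2)$, is an equivalence. That is the genuine Koszul acyclicity. Neither ``$B$ commutes with truncation'' nor the one-arity computation $B\triv(X)\simeq\triv^{\vee}(\Sigma X)$ touches it, and your reduction to ``the trivial case'' silently assumes it.

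\textbf{The alternative reading fails too.} Suppose instead you keep the ambient category fixed and induct on the top nonzero arity $m$. Then the bar construction no longer preserves the pullback levelwise: in arity $2m-1$ the simplicial terms are quadratic in $X$. The base case also becomes the global statement that $B\triv(X)$ is cofree on $\Sigma X$ and that $C$ of it is $\triv(X)$ in all arities. That is again not a one-arity computation.

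\textbf{Two smaller points.} First, saying $\sO_{\le n}\to\sO_{\le n-1}$ is a square-zero extension requires constructing a $k$-invariant $\sO_{\le n-1}\to\triv(\Sigma X)$. Your justification (that $\circ$ in arity $n$ only involves arities $\le n$) does not do this. Second, $B$ need not preserve pullbacks as a functor into coalgebras, since limits of coalgebras are not computed underlying. What you actually need is that $CB$ preserves them after forgetting to underlying objects, and that does follow from your affine-in-top-arity reasoning.

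\textbf{A way to supply the missing input.}
\begin{enumerate}
\item Strongly positive algebras are monadic over symmetric sequences concentrated in arities $\ge 2$, via $\sO\mapsto\sO_{\ge 2}$ with left adjoint the free algebra.
\item The underlying functor of $CB$ commutes with geometric realizations. This is because $\circ$ preserves sifted colimits and, by \cref{prop: cobar-finite-totalization}, $C$ is a finite totalization in each arity, which commutes with colimits in a stable category. So it suffices to check the unit on free algebras $F(V)$.
\item For $F(V)$, the cofiber sequence of right $F(V)$-modules $V\circ F(V)\to F(V)\to\unit$ gives $BF(V)\simeq\unit\oplus\Sigma V$. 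It also gives a cofiber sequence of right comodules $V\to\unit\to BF(V)$.
\item Applying $-\Box_{BF(V)}\unit$ to that sequence yields $CBF(V)\simeq\unit\oplus V\circ CBF(V)$. This is the recursion that determines $F(V)$ arity by arity.
\end{enumerate}

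For the counit, a dual coradical argument runs into the same problem. It is easier to show that $C$ is conservative by induction on arity, using the natural fiber sequence $C(\iota\tau\sQ)(n)\to C\sQ(n)\to\Omega\sQ(n)$ for $n\ge 2$. A fully faithful left adjoint with a conservative right adjoint is then an equivalence.
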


We call $B\sO$ the Koszul dual of $\sO$ and $C\sQ$ the Koszul dual of $\sQ$.
We also have a form of Koszul duality for bi(co)modules, which was proved by Heuts:

\begin{theorem}[{\cite[Theorem 14.17]{heuts2024koszulduality}}] \label{thm:koszul-duality-bimodules}
    Let $(\sA, \sO)$ and $(\sB, \sP)$ be objects of $\MMor_+$.
    There is an adjoint equivalence
    \[
    \begin{tikzcd}
    \BMod_{(\sP, \sO)}(\SSeqnu(\sA, \sB)) \ar[r, shift left, "B"] & \BcoMod_{(B\sP, B\sO)}(\SSeqnu(\sA, \sB)) \ar[l, shift left, "C"].
    \end{tikzcd}
    \]
    On underlying symmetric sequences, we have $BM \simeq \unit \circ_\sP M \circ_\sO \unit$ and $CN \simeq \unit \Box_{B\sP} N \Box_{B\sO} \unit$.
\end{theorem}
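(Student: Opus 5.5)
The plan is to construct the functors $B$ and $C$ by hand, build the adjunction, and then check that the unit and counit are equivalences by induction on arity. Strong positivity turns every limit and colimit that appears into a finite one in each fixed arity.

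First I would put a $(B\sP, B\sO)$-bicomodule structure on $BM \coloneqq \unit \circ_\sP M \circ_\sO \unit$. The coalgebra $B\sP \simeq \unit \circ_\sP \unit$ coacts via the maps
\[
\unit \circ_\sP M \to (\unit \circ_\sP \unit) \circ (\unit \circ_\sP M),
\]
which come from the lax/oplax structure of relative bar constructions, as in \cref{rem: unital-lift}. The cleanest way to get all coherences at once is to observe that $\sO \mapsto \unit \circ_\sO -$ is functorial in the Morita category. Dually, I would put a $(\sP, \sO)$-bimodule structure on $CN \coloneqq \unit \Box_{B\sP} N \Box_{B\sO} \unit$. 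Here \cref{prop: cobar-finite-totalization} is essential: it guarantees that these totalizations are finite in each arity and commute with composition, so the dual constructions make sense. The adjunction $B \dashv C$ then comes from the universal properties of the colimit and the limit, together with the augmentation $\sP \to \unit$ and the coaugmentation $\unit \to B\sP$.

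The key computation is on free objects. For a free bimodule $\sP \circ X \circ \sO$ we have $\unit \circ_\sP \sP \simeq \unit$ (extra degeneracy), so $B(\sP \circ X \circ \sO) \simeq X$ with its trivial bicomodule structure. Conversely, for a trivial bicomodule $X$, \cref{prop: cobar-finite-totalization} lets us pull the composition with $X$ out of the totalizations:
\[
C X \simeq (\unit \Box_{B\sP} \unit) \circ X \circ (\unit \Box_{B\sO} \unit) \simeq C B\sP \circ X \circ C B\sO \simeq \sP \circ X \circ \sO,
\]
where the last step uses \cref{thm:koszul-duality-operads}. One then checks that the unit $M \to CBM$ is this equivalence when $M$ is free. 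The dual argument, for cofree bicomodules, handles the counit.

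For general $M$, I would resolve $M$ by its two-sided bar resolution by free bimodules and show that $CB$ preserves this geometric realization. This is the step I expect to be the main obstacle, since $C$ is a limit and need not commute with colimits. The approach I would try first is to work arity by arity. Because $\sO$ and $\sP$ are strongly positive, the arity $n$ part of $BM$ and $CN$ depends only on arities $\le n$, and each totalization is $(n-1)$-skeletal in arity $n$ by \cref{prop: cobar-finite-totalization}. Since we are in a stable setting, finite limits commute with geometric realizations in each arity, which gives the required compatibility.

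Alternatively, one can avoid resolutions by inducting on the arity filtration. The truncation $M_{\le n}$ is an extension of $M_{\le n-1}$ by a bimodule concentrated in arity $n$, on which $\sP$ and $\sO$ act trivially except through their arity $1$ parts. Both $B$ and $C$ are exact and preserve these fibre sequences, so the equivalence reduces to trivial (co)modules concentrated in a single arity, where it follows from the free/trivial computation above. Conservativity of the forgetful functors to symmetric sequences then finishes the argument for the unit, and the symmetric argument handles the counit.
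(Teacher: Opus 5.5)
You should know that the paper gives no proof of this statement. It is imported from \cite[Theorem 14.17]{heuts2024koszulduality}, where the bar--cobar adjunction for bi(co)modules is set up in general.

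Your proposal is a direct proof instead, and its main line is sound. Both computations are correct: $B(\sP\circ X\circ\sO)\simeq\triv(X)$, and $C(\triv X)\simeq(\unit\Box_{B\sP}\unit)\circ X\circ(\unit\Box_{B\sO}\unit)\simeq\sP\circ X\circ\sO$ via \cref{prop: cobar-finite-totalization} and \cref{thm:koszul-duality-operads}. The step you flag as the main obstacle also goes through:
\begin{enumerate}
\item Geometric realizations are computed on underlying symmetric sequences in $\BMod_{(\sP,\sO)}$, because $\sP\circ-\circ\sO$ preserves sifted colimits.
\item They are also computed underlying in $\BcoMod_{(B\sP,B\sO)}$, because the forgetful functor from comodules creates colimits.
\item In each arity the cobar totalization is a finite limit in a stable category, by \cref{prop: cobar-finite-totalization}.
\end{enumerate}
So $C$ commutes with geometric realizations, and the two-sided bar resolution reduces the unit $M\to CBM$ to free bimodules.

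This is exactly the technique the paper uses in \cref{lem: KD-strong}: reduce to free bimodules via \cref{prop: cobar-finite-totalization}, then use that $B$ sends free to trivial and that $\unit\Box_{B\sP}\unit\simeq\sP$. You apply it to the unit rather than to $B(M\circ_\sP N)\to BM\Box_{B\sP}BN$.

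Your route gives an argument that is self-contained given the paper's tools. The citation supplies precisely what you only sketch:
\begin{enumerate}
\item a coherent construction of $B$ and $C$ as functors between bi(co)module categories;
\item the adjunction $B\dashv C$;
\item the identification of the algebra $\unit\Box_{B\sP}\unit$ with $\sP$, compatibly with the induced actions.
\end{enumerate}
That is where most of the technical work lies.

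Two smaller corrections. First, for the counit, your ``dual argument'' also needs the bar-side analogue of \cref{prop: cobar-finite-totalization}: over a strongly positive operad the bar construction is $(n-1)$-skeletal in arity $n$. Without it you cannot say $B$ commutes with the cofree resolution. Alternatively, once the unit is an equivalence, it suffices to show $C$ is conservative. That follows by induction on arity, because the layers above degree $0$ of the arity-$n$ totalization tower of $CN$ involve $N$ only in arities $<n$.

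Second, in your alternative arity-induction route, $B$ and $C$ are \emph{not} exact, since $\sP\circ-$ and $B\sP\circ-$ are non-linear. For instance, $B$ of a bimodule $M_n$ concentrated in arity $n$ is the cofree bicomodule $B\sP\circ M_n\circ B\sO$, which is typically nonzero in arity $2n$. So the fibre sequence $M_n\to M_{\le n}\to M_{\le n-1}$ is preserved only in arities $\le n$. The induction still works if you compare only arity-$n$ components, where $M_n$ enters linearly.
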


\section{The proof}

We now come to the main result of this paper:

\begin{theorem} \label{thm: morita-comorita-equiv}
    There is an equivalence of $2$-categories
    \[
    \mathrm{KD} \colon \MMor_+ \longrightarrow \coMMor_+
    \]
    that sends an object $(\sA, \sO)$ to $(\sA, B\sO)$, and is given by the Koszul duality functor
    \[
    B \colon \BMod_{(\sP, \sO)}(\SSeqnu(\sA, \sB)) \longrightarrow \BcoMod_{(B\sP, B\sO)}(\SSeqnu(\sA, \sB))
    \]
    on mapping categories.
\end{theorem}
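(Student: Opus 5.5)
We construct $\mathrm{KD}$ from the universal property of $\coMMor_+$ (\cref{prop: comor-plus-exists}) applied to an oplax functor out of $\MMor_+$, and then show that it is an equivalence.

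\textbf{Step 1: an oplax functor $\MMor_+ \to \pressymstc$.} Define $F \colon \MMor_+ \to \pressymstc$ on objects by $(\sA,\sO)\mapsto \sA$, and on a bimodule $M$ from $(\sA,\sO)$ to $(\sB,\sP)$ by
\[
F(M) = \unit \circ_\sP M \circ_\sO \unit \in \SSeqnu(\sA,\sB).
\]
The oplax comparison maps are
\[
\unit\circ_\sP M \circ_\sP N \circ_\sO \unit \;\simeq\; \unit\circ_\sP M\circ_\sP \sP \circ_\sP N\circ_\sO\unit \;\Rightarrow\; (\unit\circ_\sP M\circ_\sP\unit)\circ(\unit\circ_\sP N\circ_\sO\unit),
\]
induced by the augmentation $\sP\to\unit$. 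Equivalently, $\unit \circ_\sP -$ is the bar construction, which is comonoidal. To construct $F$ coherently, I would not work by hand. Instead I would take the unital \emph{lax} functor $\MMor_+ \to \MMor(\pressymstc)$ given by the inclusion, and observe that "relative composition with $\unit$" is corepresented. Concretely, $(\sA,\sO)\mapsto(\sA,\unit)$ together with the $(\unit,\sO)$-bimodule $\unit$ assemble into an oplax transformation. Composing with the $\unit$-bimodules then produces the oplax functor. A cleaner alternative is to note that $F$ is the composite of $\MMor_+$ with the functor on Morita categories induced by the augmentations. By \cref{thm:koszul-duality-operads}, $F(\id_{(\sA,\sO)}) = \unit\circ_\sO\sO\circ_\sO\unit\simeq B\sO$ is a strongly positive coalgebra. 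So $F \in \Oplax_+$, and \cref{prop: comor-plus-exists}, followed by completion, yields a unital oplax functor $\mathrm{KD}=F^{\mathrm{un}}\colon \MMor_+\to\coMMor_+$. By the dual of \cref{rem: unital-lift}, it sends $(\sA,\sO)$ to $(\sA,B\sO)$ and $M$ to $BM$ with its $(B\sP,B\sO)$-bicomodule structure. We must check that this bicomodule structure agrees with Heuts' functor in \cref{thm:koszul-duality-bimodules}; I expect this to follow from the same bar-construction formula.

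\textbf{Step 2: strongness.} $\mathrm{KD}$ is unital, so it remains to show that the oplax comparison maps
\[
B(M\circ_\sP N)\Rightarrow BM \,\Box_{B\sP}\, BN
\]
are equivalences. Apply the equivalence $C$ of \cref{thm:koszul-duality-bimodules}, or argue directly, as follows. The map is natural in $M$ and $N$. Both sides preserve geometric realizations in each variable: the left side obviously, and the right side because $B$ is an equivalence onto bicomodules and, by \cref{prop: cobar-finite-totalization}, $\Box_{B\sP}$ is arity-wise a finite totalization in a stable setting, hence commutes with geometric realizations. Every bimodule is the geometric realization of free bimodules $\sP\circ X\circ\sO$ via its bar resolution. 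It therefore suffices to treat $M=\sQ\circ X\circ\sP$ and $N=\sP\circ Y\circ\sO$. In that case $M\circ_\sP N\simeq \sQ\circ X\circ\sP\circ Y\circ\sO$, and $B$ of a free bimodule is cofree: $B(\sQ\circ X\circ \sP)\simeq B\sQ\circ X\circ B\sP$. The cobar construction of cofree bicomodules then collapses to $B\sQ\circ X\circ B\sP\circ Y\circ B\sO$, which matches the left side. One still has to verify that the comparison map is the evident identification. This is the step I expect to be the main obstacle: checking that the abstract oplax structure from Step 1 restricts on free objects to the canonical equivalence, and justifying the colimit-commutation for $\Box$ uniformly in arity.

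\textbf{Step 3: equivalence.} Since $\mathrm{KD}$ is strong, it suffices to check essential surjectivity on objects and equivalences on mapping categories. Essential surjectivity holds because every strongly positive coalgebra $\sQ$ satisfies $\sQ\simeq BC\sQ$ by \cref{thm:koszul-duality-operads}. On mapping categories, $\mathrm{KD}$ is Heuts' functor $B$, which is an equivalence by \cref{thm:koszul-duality-bimodules}. Both $\MMor_+$ and $\coMMor_+$ are complete, so $\mathrm{KD}$ is an equivalence of $2$-categories.
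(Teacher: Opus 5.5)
Your architecture matches the paper's: an oplax functor into $\pressymstc$ with $F(\id)\simeq B\sO$, lifted via \cref{prop: comor-plus-exists}, strongness by reduction to free bimodules, then the two Koszul duality theorems. However, the computation in Step 2 is wrong, and that step carries the actual content of strongness. Koszul duality does not send free bimodules to cofree bicomodules. By the formula in \cref{thm:koszul-duality-bimodules},
\[
B(\sQ\circ X\circ\sP)\simeq \unit\circ_\sQ \sQ\circ X\circ\sP\circ_\sP \unit\simeq X
\]
with the trivial bicomodule structure; it is the \emph{trivial} bimodules that go to cofree bicomodules. Your computation is also inconsistent on its own terms. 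Since $M\circ_\sP N\simeq \sQ\circ(X\circ\sP\circ Y)\circ\sO$, your claim would make the source $B\sQ\circ X\circ\sP\circ Y\circ B\sO$, with $\sP$ in the middle, while your target has $B\sP$ there, so the two do not match. The correct computation gives $\triv(X\circ\sP\circ Y)$ for the source, while the target is
\[
\triv(X)\Box_{B\sP}\triv(Y)\simeq \triv\bigl(X\circ(\unit\Box_{B\sP}\unit)\circ Y\bigr),
\]
where pulling $X$ and $Y$ out of the totalization uses \cref{prop: cobar-finite-totalization}. To identify the two you need $\unit\Box_{B\sP}\unit\simeq CB\sP\simeq\sP$, i.e.\ operadic Koszul duality \cref{thm:koszul-duality-operads}. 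This is the missing idea: strongness is not a formal collapse of cofree objects but rests on $CB\simeq\id$.

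Step 1 is a genuinely different construction, and the idea is sound. Your $F$ is conjugation by the adjunction in $\MMor_+$ between the $(\unit,\sO)$-bimodule $\unit$ and the $(\sO,\unit)$-bimodule $\unit$. Since $\reAlg$ sends these to $\Sigma^\infty_\sO$ and $\Omega^\infty_\sO$, $F$ agrees on underlying symmetric sequences with the paper's composite $\partial_*\circ\theta\circ\reAlg$. But the two points you leave open are exactly what the paper's detour through $\diffalg$ handles.

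First, conjugation by adjoints needs the left adjoints chosen as a map from the space of objects to the space of $1$-morphisms. In the completed $\MMor_+$, objects are identified along invertible bimodules, so choosing the augmentation bimodules coherently needs an argument. One option is to work on $\MMor_+^{\mathrm{pre}}$ and extend to the completion only after proving strongness. The paper instead obtains $\Sigma^\infty_\sO$ canonically from the adjunction $\diffst^{\mathrm{lin}}\hookrightarrow\diffalg^{\mathrm{lin}}$.

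Second, you must identify the bicomodule structure produced by the universal property with Heuts' functor $B$. The paper does this via the Goodwillie-calculus comparison together with \cref{prop: der-section-lambda-alg}. Without it, Step 3 cannot invoke \cref{thm:koszul-duality-bimodules} for full faithfulness.
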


We begin the proof of \cref{thm: morita-comorita-equiv} by constructing the functor $\KD \colon \MMor_+ \to \coMMor_+$.
This requires us to recall some preliminaries.

First of all, we need the notion of an $\sO$-algebra for $\sO \in \Alg(\SSeqnu(\sA, \sA))$.
By \cite[Proposition 3.2.13]{blansblom2025chainrulegoodwilliecalculus}, any stable presentable category $\sA$ has a left action by $\SSeqnu(\sA, \sA)$, given by the formula
\[
F \circ x = \bigoplus_{n \geq 1} F_n(x, \ldots, x)_{h\Sigma_n}.
\]
This allows us to define the category $\Alg_\sO(\sA)$ for $\sO \in \Alg(\SSeqnu(\sA, \sA))$ as the category of left $\sO$-modules in $\sA$.
The notation stems from the case $\sA = \Sp$, since $\Alg_\sO(\Sp)$ is the usual category of algebras over the operad $\sO$.

By \cite[Proposition 6.3]{heuts2024koszulduality}, the stabilization of $\Alg_\sO(\sA)$ is equivalent to $\sA$.
We will write
\[
\begin{tikzcd}
    \Sigma^\infty_\sO \colon \Alg_\sO(\sA) \ar[r, shift left] & \sA \ar[l, shift left] \colon \Omega^\infty_\sO
\end{tikzcd}
\]
for its stabilization adjunction.
We also note that by \cite[Example 3.3.4]{blansblom2025chainrulegoodwilliecalculus}, the functor $\Omega^\infty_\sO$ preserves filtered colimits.

\begin{definition}
    A functor $F \colon \sC \to \sD$ is called \emph{reduced} if it preserves the zero object and \emph{finitary} if it preserves filtered colimits.
    We let $\Fun^{\ast, \omega}(\sC, \sD)$ denote the category of reduced finitary functors $\sC \to \sD$.
    We will write $\diffalg$ for the locally full subcategory\footnote{If $\sX$ is a subcategory of a $2$-category $\sY$, we say it is locally full if the inclusion $\sX \hookrightarrow \sY$ induces a fully faithful functor on mapping categories.} of the $2$-category $\Cat$ spanned by categories of the form $\Alg_\sO(\sA)$ for $\sO \in \Alg(\SSeq_+(\sA, \sA))$ and reduced finitary functors between them.
\end{definition}

In \cite[Definition 2.26]{blansblom2026productrulegoodwilliecalculus}, we define a functor of $2$-categories 
\[
\reAlg \colon \MMor_+ \to \diffalg
\]
that sends $(\sA, \sO)$ to the category $\Alg_\sO(\sA)$, and a bimodule $M \in \BMod_{(\sP, \sO)}(\SSeqnu(\sA, \sB))$ to the functor $\Alg_\sO(\sA) \to \Alg_\sP(\sB)$ given by the formula
\[
X \mapsto M \circ_\sO X = \colim(
    \begin{tikzcd}
          \cdots M \circ \sO \circ \sO \circ X \ar[r, shift left=2] \ar[r] \ar[r, shift right=2] & M \circ \sO \circ X \ar[l, shift left, shorten=0.4em] \ar[l, shift right, shorten=0.4em] \ar[r, shift left] \ar[r, shift right] & M \circ X. \ar[l, shorten=0.4em]
   \end{tikzcd} 
   ).
\]

We will also make use of some concepts and results from Goodwillie calculus for the construction of $\KD$.
Given a reduced finitary functor $F \colon \Alg_\sO(\sA) \to \Alg_\sP(\sB)$, its \emph{Goodwillie derivatives} constitute a symmetric sequence
\[
\partial_*F \in \SSeqnu(\sA, \sB).
\]
It follows from \cite[Theorem 3.3.2]{blansblom2025chainrulegoodwilliecalculus} and \cite[Theorem 5.7]{blansblom2026productrulegoodwilliecalculus} that $\partial_*F$ admits the structure of a $(\sP, \sO)$-bimodule.
In fact, the assignment $F \mapsto \partial_*F$ can be extended to a functor into the Morita category.

\begin{theorem}[{\cite[Theorem 4.4.7]{blansblom2025chainrulegoodwilliecalculus}}]
    There is a functor of $2$-categories
    \[
    \partial_* \colon \diffalg \to \MMor_+
    \]
    that sends the object $\Alg_\sO(\sA)$ to $(\sA, \sO)$ and a reduced finitary functor $F \colon \Alg_\sO(\sA) \to \Alg_\sP(\sB)$ to the $(\sP, \sO)$-bimodule $\partial_*F$.
\end{theorem}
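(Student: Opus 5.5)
The plan is to obtain $\partial_*$ from the universal property of the Morita category (\cref{Thm: Morita-category-exists}). I will first build a lax functor $\diffalg \to \pressymstc$, then lift it to a unital lax functor into $\MMor(\pressymstc)$, and finally show that the lift lands in $\MMor_+$ and is in fact strong.

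\textbf{Step 1: a lax functor into $\pressymstc$.} On objects it sends $\Alg_\sO(\sA)$ to $\sA$; on $1$-morphisms it sends a reduced finitary $F$ to $\partial_*F \in \SSeqnu(\sA, \sB)$. I would realize the derivatives through the stabilizations, as $\partial_*F \simeq \partial_*(\Sigma^\infty_\sP \circ F \circ \Omega^\infty_\sO)$; the right-hand side is a functor between stable presentable categories, where derivatives live naturally in $\SSeqnu$. The lax comparison maps $\partial_*F \circ \partial_*G \Rightarrow \partial_*(F\circ G)$ are the chain-rule maps, built from the universal property of Taylor approximations: the natural map from the composite of Taylor towers to the tower of the composite. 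The unit $\eta$ is the map $\unit_\sA \to \partial_*(\id_{\Alg_\sO(\sA)})$. To make this coherent in a $2$-categorical sense, I would construct it at the level of simplicial objects (Segal spaces of functors), functorially in composable strings, rather than map by map.

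\textbf{Step 2: lifting and identifying the objects.} By \cref{Thm: Morita-category-exists} together with \cref{rem: unital-lift}, the lax functor of Step 1 lifts to a unital lax functor into $\MMor(\pressymstc)$. This lift sends $\Alg_\sO(\sA)$ to $(\sA, \partial_*\id)$, and I need to identify $\partial_*\id_{\Alg_\sO(\sA)} \simeq \sO$ as algebras. I would do this by evaluating on free algebras: the composite $\id \circ \mathrm{Free}_\sO$ is $\sO \circ (-)$, whose derivatives are $\sO$ itself, and the algebra structure is matched because the unit and multiplication of $\sO$ come from the monad $\sO\circ(-)$. Since $\sO$ is strongly positive, the lift factors through the full subcategory $\MMor_+$.

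\textbf{Step 3: the comparison maps are equivalences.} This is the main obstacle. It amounts to the chain rule
\[
\partial_*F \circ_\sP \partial_*G \xrightarrow{\sim} \partial_*(F\circ G).
\]
I would first reduce to $n$-excisive truncations in each arity, since arity-$n$ derivatives depend only on $P_nF$ and $P_nG$. The next point is that reduced finitary $n$-excisive functors out of $\Alg_\sP(\sB)$ are of the form $X \mapsto M \circ_\sP X$, at least after suitable completion, and therefore commute with geometric realizations. Given that, I would resolve $G(X)$ by the bar construction $|\sP^{\circ \bullet+1} \circ G(X)|$ by free $\sP$-algebras. The derivatives of $F$ restricted to free algebras are $\partial_*F \circ \sP$, so commuting derivatives past the realization identifies the composite with the relative composition product. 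This requires \cref{prop: cobar-finite-totalization}-type finiteness and the fact that the composition product preserves sifted colimits. The delicate point is that $F$ itself need not preserve geometric realizations, only its Taylor layers do, so the argument must be run arity by arity on finite Taylor stages, where the relevant realizations are preserved.
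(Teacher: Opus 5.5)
The paper gives no proof of this statement; it imports it from \cite[Theorem 4.4.7]{blansblom2025chainrulegoodwilliecalculus}. Your outer skeleton is sensible: a lax functor to $\pressymstc$, its unital lift via \cref{Thm: Morita-category-exists}, then strongness via a chain rule. The gap is in Step 1, which rests on a false identification: $\partial_*F$ is \emph{not} $\partial_*(\Sigma^\infty_\sP F\Omega^\infty_\sO)$.

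The latter is the Koszul dual bicomodule $B(\partial_*F)\simeq\unit\circ_\sP\partial_*F\circ_\sO\unit$. This is exactly what the proof of \cref{prop: KD-on-objects-and-morphisms} uses, via \cite[Corollary 4.4.4]{blansblom2025chainrulegoodwilliecalculus}. Already for $F=\id$ you get $B\sO$ rather than $\sO$; compare pointed spaces, where $\partial_*(\Sigma^\infty\Omega^\infty)$ is the commutative cooperad and $\partial_*\id$ the spectral Lie operad.

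Correspondingly, $F\mapsto\Sigma^\infty_\sP F\Omega^\infty_\sO$ is naturally \emph{oplax}, via the units $\id\Rightarrow\Omega^\infty_\sP\Sigma^\infty_\sP$; this is the lemma constructing $\theta$. A lax comparison map $\Sigma^\infty_\sQ G\Omega^\infty_\sP\Sigma^\infty_\sP F\Omega^\infty_\sO\Rightarrow\Sigma^\infty_\sQ GF\Omega^\infty_\sO$ would need a transformation $\Omega^\infty_\sP\Sigma^\infty_\sP\Rightarrow\id$, which is not available.

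Carried out consistently, your Step 1 yields $\partial_*\circ\theta$. Its lift lands in $\coMMor_+$ and sends $\Alg_\sO(\sA)$ to $(\sA,B\sO)$. That is the ingredient of \cref{constr: KD-construction}, not the functor in the statement. Getting back to $\MMor_+$ would require \cref{thm: morita-comorita-equiv}, whose proof here uses the statement you are proving (through \cref{prop: der-section-lambda-alg} and \cref{prop: KD-on-objects-and-morphisms}), so that route is circular. Your description of the comparison maps is also reversed: what exists naturally is $P_n(FG)\to P_n(P_nF\circ P_nG)$, from the tower of the composite to the composite of the towers.

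So the missing idea is a \emph{coherent lax} structure on $F\mapsto\partial_*F$ whose value on $\id_{\Alg_\sO(\sA)}$ is the algebra $\sO$. ``Working at the level of simplicial objects'' does not supply one.

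Your Step 2 silently switches to a different model: conjugation by the free--forgetful adjunctions, $F\mapsto U_\sP F\,\mathrm{Free}_\sO$. This is indeed lax, via the counits $\mathrm{Free}_\sP U_\sP\Rightarrow\id$, and sends $\id$ to the monad $\sO\circ(-)$. If you build Step 1 that way, you still must do two things:
\begin{itemize}
\item prove $\partial_*(U_\sP F\,\mathrm{Free}_\sO)\simeq\partial_*F$;
\item choose these adjunctions coherently over the space of objects of $\diffalg$ (compare the work needed to choose $\Sigma^\infty_\sO$ coherently for $\theta$).
\end{itemize}
Note that the paper obtains the $(\sP,\sO)$-bimodule structure on $\partial_*F$ by combining \cite[Theorem 3.3.2]{blansblom2025chainrulegoodwilliecalculus} with \cite[Theorem 5.7]{blansblom2026productrulegoodwilliecalculus}. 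So matching $\partial_*\id_{\Alg_\sO(\sA)}$ with $\sO$ as algebras is a genuine input, not a one-line check on free algebras.

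Step 3 has the right strategy (finite Taylor stages, free resolutions, arity-wise finiteness), but its justification is wrong. Finitary $n$-excisive functors are not in general of the form $X\mapsto M\circ_\sP X$; already for $\sP=\unit$, polynomial functors between stable categories need not split into their homogeneous layers. What you actually need is only that $P_nF$ preserves geometric realizations, and this must be argued from its layers. Two further points in Step 3:
\begin{itemize}
\item $F\circ\mathrm{Free}_\sP$ has derivatives $\partial_*F$, not $\partial_*F\circ\sP$; the $k$-th term of the bar resolution has derivatives $\partial_*F\circ\sP^{\circ k}\circ\partial_*G$.
\item You must still identify the resulting equivalence with the lax comparison map of \cref{rem: unital-lift}.
\end{itemize}
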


\begin{remark}
    We write $\diffst$ for the full subcategory of $\diffalg$ spanned by the stable presentable categories; i.e.\ those of the form $\Alg_{\unit_\sA}(\sA) \simeq \sA$.
    Note that the category $\pressymstc$ is equivalent to the full subcategory of $\MMor_+$ on objects of the form $(\sA, \unit_\sA)$ and that the restriction of the functor $\partial_* \colon \diffalg \to \MMor_+$ to $\diffst$ factors through $\pressymstc$.
\end{remark}

We will need the following result on the interaction between $\partial_*$ and $\reAlg$.

\begin{proposition} \label{prop: der-section-lambda-alg}
    Let $(\sA, \sO)$ and $(\sB, \sP)$ be objects of $\MMor_+$.
    The composite
    \[
    \BMod_{(\sP, \sO)}(\SSeqnu(\sA, \sB)) \xrightarrow{\reAlg} \Fun^{\ast, \omega}(\Alg_\sO(\sA), \Alg_\sP(\sB)) \xrightarrow{\partial_*} \BMod_{(\sP, \sO)}(\SSeqnu(\sA, \sB))
    \]
    induced by $\reAlg$ and $\partial_*$ on mapping categories is equivalent to the identity functor.
\end{proposition}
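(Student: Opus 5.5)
We show that $\partial_*(M \circ_\sO -) \simeq M$ naturally in $M$, including the bimodule structures.

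\textbf{Step 1: reduce to free bimodules.} Every $(\sP,\sO)$-bimodule $M$ is the geometric realization of the two-sided bar resolution
\[
\bigl|\sP \circ \sP^{\circ k} \circ M \circ \sO^{\circ l} \circ \sO\bigr|,
\]
that is, a sifted colimit of free bimodules $\sP \circ K \circ \sO$ with $K \in \SSeqnu(\sA,\sB)$. Both functors in the composite preserve these colimits.
\begin{itemize}
\item The functor $\reAlg$ preserves them, since the relative composition product commutes with sifted colimits in the bimodule variable.
\item The functor $\partial_*$, restricted to reduced finitary functors, commutes with sifted colimits of functors. Each layer $D_n$ is built by cross-effects, which are finite limits, followed by filtered colimits; in the stable target after $\Sigma^\infty_\sP$, finite limits commute with geometric realizations. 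One also needs that $\Sigma^\infty_\sP$ preserves colimits and that the derivative is computed on $\Sigma^\infty_\sP \circ F \circ \Omega^\infty_\sO$.
\end{itemize}
It therefore suffices to produce a natural equivalence $\partial_*\reAlg(\sP\circ K\circ\sO) \simeq \sP \circ K \circ \sO$ of bimodules, natural in $K$ and compatible with the simplicial structure maps. It is cleaner to produce a natural comparison map first and then check it is an equivalence on free objects.

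\textbf{Step 2: construct the comparison map.} Use the functoriality of $\partial_* \colon \diffalg \to \MMor_+$ and the fact that $\reAlg$ is a functor of $2$-categories. Consider the underlying-object functor
\[
U = \reAlg(\sO) \colon \Alg_\sO(\sA) \to \sA,
\]
where $\sO$ is viewed as an $(\unit,\sO)$-bimodule. Applying the chain rule $\partial_*$ to the factorization
\[
\Sigma^\infty_\sP\circ(M\circ_\sO -) \simeq (\Sigma^\infty_\sP M\circ_\sO -),
\]
and comparing with the case $\sO = \unit$, gives a map. In that case $\reAlg(K) = K\circ -$ is an analytic functor between stable categories whose Taylor tower splits. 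Its derivatives are $K$ itself, because $D_n(K\circ -) = K_n(-,\ldots,-)_{h\Sigma_n}$ on $\diffst$; this is the identification of $\partial_*$ restricted to $\diffst$, which factors through $\pressymstc$.

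\textbf{Step 3: the free case.} For free $M = \sP\circ K\circ\sO$ we have
\[
\reAlg(M) \simeq F_\sP \circ (K\circ -) \circ U_\sO,
\]
where $F_\sP \colon \sB \to \Alg_\sP(\sB)$ is the free functor $\reAlg(\sP)$ and $U_\sO$ is the forgetful functor. Since $\partial_*$ is a $2$-functor, we get
\[
\partial_*\reAlg(M) \simeq \partial_*F_\sP\circ_\unit \partial_*(K\circ-)\circ_\unit \partial_*U_\sO.
\]
The known computations $\partial_* F_\sP \simeq \sP$ and $\partial_*U_\sO \simeq \sO$, as bimodules from the cited work of Blans--Blom and Heuts, then give $\sP\circ K\circ\sO$, as required.

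\textbf{Main obstacle.} The hard part is ensuring that these identifications assemble coherently into an equivalence of functors of $\infty$-categories, rather than objectwise equivalences. The cleanest route is to observe that both $\reAlg$ and $\partial_*$ are $2$-functors compatible with the inclusion $\pressymstc \subseteq \MMor_+$. Naturality should then be extracted from the $2$-functoriality, using the action of $\sP$ and $\sO$ via $F_\sP$ and $U_\sO$, rather than built by hand.
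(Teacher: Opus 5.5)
Your Step 1 (bar resolution, colimit preservation) and your Step 3 (computation on free bimodules) are each fine on their own. The link between them is missing, and that link is what the proposition actually asserts. Reducing to free bimodules only works once you have a natural transformation between $\partial_*\circ\reAlg$ and the identity of $\BMod_{(\sP,\sO)}(\SSeqnu(\sA,\sB))$, defined for \emph{all} bimodules $M$; you then check that it is an equivalence on free ones. Step 2 is meant to supply this transformation, but it constructs nothing. The factorization $\Sigma^\infty_\sP\circ\reAlg(M)\simeq\reAlg(\unit\circ_\sP M)$ together with the chain rule tells you something about $\unit\circ_\sP\partial_*\reAlg(M)$, that is, about $\partial_*\reAlg(M)$ only after applying a further functor. ``Comparing with the case $\sO=\unit$'' does not specify any map of bimodules $\partial_*\reAlg(M)\to M$ or $M\to\partial_*\reAlg(M)$.

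Step 3 cannot substitute for such a map. The equivalences $\partial_*\reAlg(\sP\circ K\circ\sO)\simeq\partial_*F_\sP\circ\partial_*(K\circ -)\circ\partial_*U_\sO\simeq\sP\circ K\circ\sO$ that $2$-functoriality gives you are natural only for maps of free bimodules of the form $\sP\circ g\circ\sO$. The face maps of the bar resolution are in general not of this form. The ones induced by $\sP\circ\sP\to\sP$ and $\sO\circ\sO\to\sO$ are, in $\MMor_+$, whiskerings of the counits of the adjunctions $F_\sP\dashv U_\sP$ and $F_\sO\dashv U_\sO$ (the bimodule $\sP$ viewed as a morphism $(\sB,\unit)\to(\sB,\sP)$ and as one $(\sB,\sP)\to(\sB,\unit)$, and similarly for $\sO$).

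To identify the two simplicial objects you would need two further facts, holding coherently in all simplicial degrees:
\begin{enumerate}
\item $\partial_*$ sends these units and counits to the unit and multiplication of $\sP$ and $\sO$ under your identifications;
\item the stable equivalence $\partial_*(K\circ -)\simeq K$ is compatible with the compositors of $\reAlg$ and $\partial_*$.
\end{enumerate}
In effect, you need $\partial_*\circ\reAlg$ to agree with the identity as a $2$-functor on the part of $\MMor_+$ generated by $\pressymstc$ and these adjunctions. An equivalence on free bimodules that is natural only in the generator $K$ does not determine an equivalence of functors, even between colimit-preserving ones. For comparison, restriction of scalars along a nontrivial ring automorphism preserves colimits and fixes free modules naturally in the generators, yet is not the identity.

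This coherence is the real content of the statement, and your final paragraph only asserts that it ``should be extracted from the $2$-functoriality''. The paper itself gives no argument here; it imports the result from \cite[Proposition 3.18]{blansheuts2026characterizationspectrallieoperad}. A self-contained proof therefore has to actually construct the comparison transformation.
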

\begin{proof}
    This follows from \cite[Proposition 3.18]{blansheuts2026characterizationspectrallieoperad}.
\end{proof}

The final ingredient needed for the proof of \cref{thm: morita-comorita-equiv} is the following lemma.

\begin{lemma}
    There is an oplax functor of $2$-categories $\theta \colon \diffalg \to \diffst$ that sends $\Alg_\sO(\sA)$ to $\sA$ and a reduced finitary functor $F \colon \Alg_\sO(\sA) \to \Alg_\sP(\sB)$ to the composite
    \[
    \sA \xrightarrow{\Omega^\infty_\sO} \Alg_\sO(\sA) \xrightarrow{F} \Alg_\sP(\sB) \xrightarrow{\Sigma^\infty_\sP} \sB.
    \]
    If $G \colon \Alg_\sP(\sB) \to \Alg_\sQ(\sC)$ is another reduced finitary functor, then the comparison map
    \[
    \Sigma^\infty_\sQ G F \Omega^\infty_\sO \Rightarrow \Sigma^\infty_\sQ G \Omega^\infty_\sP \Sigma^\infty_\sP F \Omega^\infty_\sO
    \]
    of $\theta$ is given by the unit of the adjunction $(\Sigma^\infty_\sP, \Omega^\infty_\sP)$.
\end{lemma}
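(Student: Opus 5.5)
We construct $\theta$ as a conjugation by an adjunction. The general principle is this: in a $2$-category $\sX$, suppose we are given objects $c_x$ together with adjunctions $L_x \dashv R_x$ between $x$ and $c_x$. Then the assignment $x \mapsto c_x$, $f \mapsto L_y f R_x$ extends to an oplax functor from the locally full subcategory on these objects to the one on the $c_x$. The oplax comparison maps are induced by the units $\id \Rightarrow R_y L_y$, and the unit comparison by the counits $L_x R_x \Rightarrow \id$. The plan is to make this coherent in the $\infty$-setting. We then check that it restricts to reduced finitary functors, and that its target lands in $\diffst$.

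The cleanest way to get the coherence is to avoid constructing the oplax structure by hand and to use a universal construction instead. Consider the $2$-category $\Cat^{\mathrm{adj}}$ whose objects are adjunctions $L \colon \sC \rightleftarrows \sD \colon R$, and the functor $(L, R) \mapsto \sC$ to $\Cat$, which is a strong functor. On the $2$-category of categories equipped with a chosen adjunction to a second category, the conjugation $F \mapsto LFR$ is oplax by a standard argument. One way to phrase it is that taking mates/conjugates along an adjunction is a lax-to-oplax correspondence. Alternatively, $\Sigma^\infty_\sQ(-)\Omega^\infty_\sO$ can be written as the composite of the strong functor $\Cat \to \Cat$ given by $\Fun(-,-)$ acting on pairs with the oplax "restriction along units". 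Concretely, I would cite the general lemma from \cite[\S A.3]{blansblom2025chainrulegoodwilliecalculus} on (op)lax functors if it exists there. Otherwise I would build $\theta$ via a lax functor of double categories $\Delta^\op \to \Cat$ as follows. An $n$-simplex consists of a composable string $F_1,\dots,F_n$ of reduced finitary functors. It is sent to the string $\Sigma^\infty F_i \Omega^\infty$. The structure maps that compose adjacent functors are then filled by inserting the unit $\eta_\sP \colon \id \Rightarrow \Omega^\infty_\sP \Sigma^\infty_\sP$. The unit comparison $\theta(\id_{\Alg_\sO(\sA)}) = \Sigma^\infty_\sO\Omega^\infty_\sO \Rightarrow \id_\sA$ is the counit, and the triangle identities give the required unitality coherences. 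Functoriality in the simplicial direction comes from the fact that an adjunction is a functor from the walking adjunction, which is a strict $2$-category, so all higher coherences of the units and counits are encoded there.

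Next we verify the object and morphism conditions. The assignment sends $\Alg_\sO(\sA)$ to $\sA$, which lies in $\diffst$. For $F$ reduced and finitary, the composite $\Sigma^\infty_\sP F \Omega^\infty_\sO$ is again reduced and finitary. It is reduced because all three functors preserve zero objects: $\Omega^\infty$ is a right adjoint and both categories are pointed. It is finitary because $\Omega^\infty_\sO$ preserves filtered colimits (as recalled above), $F$ does by assumption, and $\Sigma^\infty_\sP$ is a left adjoint. Since $\diffst$ is locally full, this suffices on morphisms. It also suffices on $2$-morphisms, which are simply whiskered.

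I expect the main obstacle to be the first step: producing the oplax functor with full $\infty$-categorical coherence rather than just its $2$-truncated data. My first attempt would be to realize $\theta$ as the composite of two steps. The first is the strong functor from $\diffalg$ to a $2$-category of "categories with a chosen stabilization adjunction", which uses the functoriality of stabilization. The second is the general conjugation-by-adjunction oplax functor. For the latter I would appeal to the description of oplax functors into $\Cat$ via (co)cartesian fibrations over $\sY$ or its Gray-type unstraightening, where the conjugation is just restriction along the unit-filled diagrams.
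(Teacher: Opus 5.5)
Your skeleton matches the paper's. First you choose the left adjoints $\Sigma^\infty_\sO\colon\Alg_\sO(\sA)\to\sA$ coherently in the object, then you conjugate by these adjunctions. The general conjugation result you hope to cite exists, and it is exactly what the paper uses: \cite[Theorem 2.1.16]{blansblom2025chainrulegoodwilliecalculus}. Its input is a map of spaces $\alpha\colon\diffalg^0\to\diffalg^1$, from objects to morphisms, whose values are left adjoints. Its output is the oplax functor whose comparison maps are the units. So your double-category fallback is not needed. As sketched it would also not suffice: the walking adjunction encodes the coherences of a single adjunction, not of a family varying over the space of objects and interacting with the non-strict composition in $\diffalg$.

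The genuine difference is in how $\alpha$ is produced. The paper builds it internally:
\begin{enumerate}
\item The Arone--Ching identity $\lin(F\circ G)\simeq\lin(\lin F\circ\lin G)$ yields a linearized $2$-category $\diffalg^{\mathrm{lin}}$, with a strong functor $\lin$ and a lax inclusion $\iota$.
\item By \cite[Theorem 6.2.3.21]{HA}, the inclusion $\diffst^{\mathrm{lin}}\hookrightarrow\diffalg^{\mathrm{lin}}$ is a right adjoint.
\item $\alpha$ is the unit of this adjunction, pushed along $\iota$.
\end{enumerate}
This singles out $\Sigma^\infty_\sO$ by a universal property inside the $2$-category itself and reuses the compatible-localization machinery already available.

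Your ``functoriality of stabilization'' also works and is more elementary, provided you make precise that only a map of spaces over the groupoid of objects is required. For instance, take the unit $\Sigma^\infty\colon\sC\to\Sp(\sC)\simeq\sC\otimes\Sp$, which is natural in $\sC$ among pointed presentable categories and colimit-preserving functors, and restrict it to cores. This uses that $\Alg_\sO(\sA)$ is pointed and presentable, and that $\Sp(\Alg_\sO(\sA))\simeq\sA$. What you must avoid is asking for stabilization to be strongly functorial along reduced finitary functors. Along those it is only functorial after applying $\lin$, which is why the paper passes through $\diffalg^{\mathrm{lin}}$.

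Your checks that $\Sigma^\infty_\sP F\Omega^\infty_\sO$ is reduced and finitary, and hence lies in the locally full $\diffst$, are correct; the paper leaves them implicit.
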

\begin{proof}
    Let $\diffalg^0$ and $\diffalg^1$ be the spaces of objects and morphisms in $\diffalg$.
    We begin by constructing a map $\alpha \colon \diffalg^0 \to \diffalg^1$ that sends $\Alg_\sO(\sA)$ to $\Sigma^\infty_\sO$.
    
    Write
    \[
    \iota \colon \Exc_1^{\ast, \omega}(\Alg_\sO(\sA), \Alg_\sP(\sB)) \hookrightarrow \Fun^{\ast, \omega}(\Alg_\sO(\sA), \Alg_\sP(\sB))
    \]
    for the inclusion of the full subcategory spanned by the linear functors in the sense of Goodwillie calculus; recall that a functor is linear if it is reduced and sends pushout squares to pullback squares.
    The functor $\iota$ has a left adjoint $\lin$ given by the $1$-excisive approximation of Goodwillie \cite[Theorem 6.1.1.10]{HA}.
    By the equation
    \[
    \lin(F \circ G) \simeq \lin(\lin(F) \circ \lin(G))
    \]
    of Arone--Ching \cite[Proposition 1.3.1]{AroneChingChainRule}, it follows that the left adjoints $\lin$ for varying $\Alg_\sO(\sA)$ and $\Alg_\sP(\sB)$ form a compatible family of localizations \cite[Definition A.3.24]{blansblom2025chainrulegoodwilliecalculus}, so that by \cite[Proposition A.3.25]{blansblom2025chainrulegoodwilliecalculus} we obtain a $2$-category $\diffalg^{\mathrm{lin}}$ which has the same space of objects as $\diffalg$ and linear functors as $1$-morphisms. 
    Composition in $\diffalg^{\mathrm{lin}}$ is defined by taking functor composition and then applying $\lin$.
    The same proposition also provides us with functors of $2$-categories
    \[
    \lin \colon \diffalg \rightleftarrows \diffalg^{\mathrm{lin}} \colon \iota
    \]
    where $\lin$ is strong and $\iota$ is lax.
    They induce the identity on objects and the adjunction $(\lin, \iota)$ on mapping categories.

    The inclusion $\diffst^{\mathrm{lin}} \hookrightarrow \diffalg^{\mathrm{lin}}$ of the full subcategory spanned by the stable categories is a right adjoint, since by \cite[Theorem 6.2.3.21]{HA} precomposition with $\Sigma^\infty_\sO$ gives an equivalence
    \[
    \Exc_1^{\ast, \omega}(\Alg_\sO(\sA), \sB) \xrightarrow{\sim} \Exc_1^{\ast, \omega}(\sA, \sB).
    \]
    The unit of this adjunction together with the map induced on spaces of morphisms by the lax functor $\iota$ gives a map of spaces $\alpha \colon \diffalg^0 \to \diffalg^1$ that sends $\Alg_\sO(\sA)$ to the morphism $\Sigma^\infty_\sO \colon \Alg_\sO(\sA) \to \sA$.

    Since all morphisms in the image of $\alpha$ are left adjoints, we can invoke \cite[Theorem 2.1.16]{blansblom2025chainrulegoodwilliecalculus}
    to conclude that there exists an oplax functor of $2$-categories
    \[
    \theta \colon \diffalg \to \diffst
    \]
    that sends $\Alg_{\sO}(\sA)$ to $\sA$, and a functor $F \colon \Alg_{\sO}(\sA) \to \Alg_{\sP}(\sB)$ to the composite $\Sigma^\infty_\sP \circ F \circ \Omega^\infty_\sO$, such that the oplax comparison maps are given by the units of the stabilization adjunctions.
\end{proof}

We can now construct the $2$-categorical Koszul duality functor.

\begin{construction} \label{constr: KD-construction}
Consider the oplax functor defined by the composite
\[
\begin{tikzcd}
\MMor_+ \ar[r, "\reAlg"] & \diffalg \ar[r, "\theta"] & \diffst \ar[r, "\partial_*"] & \pressymstc.
\end{tikzcd}
\]
Unraveling the definition, we see that it sends the identity morphism of $(\sA, \sO) \in \MMor_+$ to $\partial_*{\Sigma^\infty_\sO \Omega^\infty_\sO}$, which is strongly positive by \cite[Example 3.1.36]{blansblom2025chainrulegoodwilliecalculus}.
By the universal property of the dual Morita category \cref{prop: comor-plus-exists}, this composite therefore lifts to a unital oplax functor 
\[
\KD \colon \MMor_+ \to \coMMor_+.
\]
\end{construction}

We now show that this functor behaves in the expected way on objects and $1$-morphisms.

\begin{proposition}\label{prop: KD-on-objects-and-morphisms}
    The functor $\KD \colon \MMor_+ \to \coMMor_+$ sends an object $(\sA, \sO)$ to $(\sA, B\sO)$, where $B\sO$ denotes the Koszul dual of $\sO$.
    On mapping categories, it is given by the Koszul duality functor
    \[
    B \colon \BMod_{(\sP, \sO)}(\SSeqnu(\sA, \sB)) \longrightarrow \BcoMod_{(B\sP, B\sO)}(\SSeqnu(\sA, \sB))
    \]
\end{proposition}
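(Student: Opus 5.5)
We unwind \cref{constr: KD-construction} and compute the composite oplax functor $\partial_* \circ \theta \circ \reAlg$ on identities and on $1$-morphisms. We then compare with \cref{thm:koszul-duality-operads} and \cref{thm:koszul-duality-bimodules}, using the dual of \cref{rem: unital-lift} to read off what the unital lift $\KD$ does.

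\emph{Objects.} The coalgebra attached to $(\sA,\sO)$ is $\partial_*(\Sigma^\infty_\sO\Omega^\infty_\sO)$. Its comultiplication comes from the oplax structure, namely the unit of $(\Sigma^\infty_\sO,\Omega^\infty_\sO)$, followed by $\partial_*$. So we must identify this comonad-derivative coalgebra with $B\sO=\unit\circ_\sO\unit$.

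First, \cref{prop: der-section-lambda-alg} identifies $\Omega^\infty_\sO$ and $\Sigma^\infty_\sO$ with $\reAlg$ of the bimodules $\unit$, regarded as an $(\sO,\unit)$- and a $(\unit,\sO)$-bimodule respectively. Indeed, $\Sigma^\infty_\sO X\simeq \unit\circ_\sO X$ and $\Omega^\infty_\sO$ is restriction along $\sO\to\unit$, and each of these is the image under $\reAlg$ of the corresponding bimodule. Since $\reAlg$ is a strong functor of $2$-categories, $\Sigma^\infty_\sO\Omega^\infty_\sO\simeq\reAlg(\unit\circ_\sO\unit)$. Hence, by \cref{prop: der-section-lambda-alg} and the fact that $\partial_*$ is a functor on $\diffst\to\pressymstc$, we get $\partial_*(\Sigma^\infty_\sO\Omega^\infty_\sO)\simeq\unit\circ_\sO\unit\simeq B\sO$ as symmetric sequences.

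To match the coalgebra structures, we observe that the comultiplication on $\unit\circ_\sO\unit$ from \cref{thm:koszul-duality-operads} is the map $\unit\circ_\sO\unit\to\unit\circ_\sO\sO\circ_\sO\unit\to\unit\circ_\sO\unit\circ_\sO\unit$ induced by $\sO\to\unit$. This map is precisely $\reAlg$ applied to the counit/unit data of the adjunction. If \cite{blansblom2025chainrulegoodwilliecalculus} already defines $B$ via derivatives of $\Sigma^\infty\Omega^\infty$, this step is immediate; otherwise it follows from naturality of the equivalence in \cref{prop: der-section-lambda-alg}.

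\emph{Morphisms.} A bimodule $M$ is sent to $\partial_*(\Sigma^\infty_\sP\,\reAlg(M)\,\Omega^\infty_\sO)$. By the same argument this is $\partial_*\reAlg(\unit\circ_\sP M\circ_\sO\unit)\simeq\unit\circ_\sP M\circ_\sO\unit\simeq BM$. By the dual of \cref{rem: unital-lift}, the $(B\sP,B\sO)$-bicomodule structure on $\KD(M)$ is given by the oplax comparison maps for the composites $\id\circ M$ and $M\circ\id$. These are induced by the units $\id\Rightarrow\Omega^\infty\Sigma^\infty$, which agree with Heuts' coaction $\unit\circ_\sP M\circ_\sO\unit\to\unit\circ_\sP\unit\circ_\sP M\circ_\sO\unit$. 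Naturality in $M$ then follows because every identification used is functorial in $M$.

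The main obstacle is the structural comparison: making the identification of symmetric sequences coherent, so that it respects coalgebra and bicomodule structures up to all higher homotopies rather than just on underlying objects. I would handle this by treating the whole situation $2$-categorically. The adjunction $\Sigma^\infty_\sO\dashv\Omega^\infty_\sO$ in $\diffalg$ is the image under $\reAlg$ of the adjunction between $\unit$ viewed as $(\unit,\sO)$- and $(\sO,\unit)$-bimodules in $\MMor_+$. The induced comonad and its comodules are therefore transported by the strong functor $\partial_*\circ\reAlg\simeq\id$, and \cref{thm:koszul-duality-bimodules} defines $B$ exactly as this comonad/comodule construction.
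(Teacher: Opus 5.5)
Your computation on underlying symmetric sequences is sound. $\reAlg$ is strong and carries the Morita adjunction between the $(\unit,\sO)$- and $(\sO,\unit)$-bimodules $\unit$ to $\Sigma^\infty_\sO\dashv\Omega^\infty_\sO$. Applying \cref{prop: der-section-lambda-alg} to $\unit\circ_\sP M\circ_\sO\unit$ then gives $\KD(M)\simeq BM$ in $\SSeqnu(\sA,\sB)$, naturally in $M$. The identifications $\reAlg(\unit)\simeq\Sigma^\infty_\sO$ and $\reAlg(\unit)\simeq\Omega^\infty_\sO$ are not part of \cref{prop: der-section-lambda-alg}, though. They are separate facts: derived indecomposables compute the stabilization, and $\Omega^\infty_\sO$ is the trivial-algebra functor.

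The gap is in the structural comparison, which is the real content of the proposition. Write $\Phi=\partial_*\circ\reAlg$. The comultiplication on $\KD(\sA,\sO)$ and the coactions on $\KD(M)$ are obtained by applying $\Phi$ to unit maps and then using the composition constraints of the strong functor $\Phi$, for instance $\Phi\bigl((\unit\circ_\sP\unit)\circ(\unit\circ_\sP M\circ_\sO\unit)\bigr)\simeq\Phi(\unit\circ_\sP\unit)\circ\Phi(\unit\circ_\sP M\circ_\sO\unit)$. To transport this along $\Phi\simeq\id$ and land on the standard structures, that equivalence must be coherently compatible with these composition constraints. In other words you need an equivalence of $2$-functors, or at least a monoidal equivalence on $\SSeqnu(\sA,\sA)$ compatible with the bitensorings. \cref{prop: der-section-lambda-alg} only gives a natural equivalence on each mapping category separately. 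So neither your appeal to ``naturality'' of it nor your claim that everything is ``transported by the strong functor $\partial_*\circ\reAlg\simeq\id$'' is justified. You also assume, without proof, that the coalgebra structure of \cref{thm:koszul-duality-operads} and Heuts' bicomodule structure in \cref{thm:koszul-duality-bimodules} are the ones induced by that Morita adjunction.

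The paper avoids needing any monoidality by ordering the argument differently. It cites the coalgebra equivalence $\partial_*\Sigma^\infty_\sO\Omega^\infty_\sO\simeq B\sO$ from \cite[Theorem 4.3.1]{blansblom2025chainrulegoodwilliecalculus} and \cite[Theorem 5.7]{blansblom2026productrulegoodwilliecalculus}. It also cites \cite[Corollary 4.4.4]{blansblom2025chainrulegoodwilliecalculus}, which identifies $F\mapsto\partial_*(\Sigma^\infty_\sP F\Omega^\infty_\sO)$ with $B\circ\partial_*$ as functors from $\Fun^{\ast,\omega}(\Alg_\sO(\sA),\Alg_\sP(\sB))$ to $\BcoMod_{(B\sP,B\sO)}(\SSeqnu(\sA,\sB))$. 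This gives $\KD\simeq B\circ\partial_*\circ\reAlg$ on mapping categories. \cref{prop: der-section-lambda-alg} is then used only as a plain equivalence of functors on $\BMod_{(\sP,\sO)}(\SSeqnu(\sA,\sB))$, before $B$ is applied, which is exactly what it provides. To close your gap you need this comparison result. Alternatively, you would need a proof that $\partial_*\circ\reAlg$ is equivalent to the identity as a $2$-functor on $\pressymstc$, together with an identification of the Koszul duality structures with the Morita-comonad ones.
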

\begin{proof}
Tracing through the construction, we find that 
\[
\KD(\sA, \sO) = (\sA, \partial_*{\Sigma^\infty_{\sO}\Omega^\infty_{\sO}}).
\]
Since there is an equivalence of coalgebras $\partial_*{\Sigma^\infty_{\sO}\Omega^\infty_{\sO}} \simeq B\sO$ by 
\cite[Theorem 4.3.1]{blansblom2025chainrulegoodwilliecalculus} and \cite[Theorem 5.7]{blansblom2026productrulegoodwilliecalculus}, we have proved the first claim.

Suppose $(\sA, \sO)$ and $(\sB, \sP)$ are a pair of objects in $\MMor_+$.
On mapping categories, the functor $\KD$ induces the composite

\begin{align*}
    \BMod_{(\sP, \sO)}(\SSeqnu(\sA, \sB)) & \myarrow{\reAlg} \Fun^{\ast, \omega}(\Alg_\sO(\sA), \Alg_{\sP}(\sB)) \\
    & \myarrow{\Sigma^\infty_{\sP} \circ - \circ \Omega^\infty_{\sO}} \BcoMod_{(\Sigma^\infty_{\sP} \Omega^\infty_{\sP}, \Sigma^\infty_{\sO} \Omega^\infty_{\sO})}(\Fun^{\ast, \omega}(\sA, \sB))  \\
    & \myarrow{\partial_*} \BcoMod_{(B\sP, B\sO)}(\SSeqnu(\sA, \sB)),
\end{align*}
where we used the equivalences $\partial_*{\Sigma^\infty_{\sO} \Omega^\infty_{\sO}} \simeq B\sO$ and $\partial_*{\Sigma^\infty_{\sP} \Omega^\infty_{\sP}} \simeq B\sP$ to identify the target of the final morphism.
By \cite[Corollary 4.4.4]{blansblom2025chainrulegoodwilliecalculus}, the composite of the final two arrows is equivalent to
\[
\Fun^{\ast, \omega}(\Alg_\sO(\sA), \Alg_{\sP}(\sB)) \xrightarrow{\partial_*} \BMod_{(\sP, \sO)}(\SSeqnu(\sA, \sB)) \xrightarrow{B} \BcoMod_{(B\sP, B\sO)}(\SSeqnu(\sA, \sB)).
\]
Since $\partial_* \circ \reAlg$ is equivalent to the identity on mapping categories by \cref{prop: der-section-lambda-alg}, this completes the proof.
\end{proof}

The following lemma is the key computation needed for the proof of the main theorem.

\begin{lemma}\label{lem: KD-strong}
    The oplax functor $\KD \colon \MMor_+ \to \coMMor_+$ is a strong functor of $2$-categories.
\end{lemma}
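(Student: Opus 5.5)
We must show that the oplax comparison maps of $\KD$ are equivalences. By \cref{constr: KD-construction} and the dual of \cref{rem: unital-lift}, for bimodules $M \in \BMod_{(\sQ,\sP)}(\SSeqnu(\sB,\sC))$ and $N \in \BMod_{(\sP,\sO)}(\SSeqnu(\sA,\sB))$ the comparison map has the form
\[
B(M \circ_\sP N) \Rightarrow BM \,\Box_{B\sP}\, BN .
\]
It is obtained from the oplax structure of $\theta$, which is the unit of $(\Sigma^\infty_\sP,\Omega^\infty_\sP)$ inserted between $\Sigma^\infty_\sQ \reAlg(M)$ and $\reAlg(N)\Omega^\infty_\sO$, followed by the coassembly map into the cobar construction. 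Since $\KD$ is unital, it is enough to check that this map is an equivalence on underlying symmetric sequences. Conservativity of the forgetful functor from bicomodules lets us work arity by arity.

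The plan is to compute both sides with the explicit formulas of \cref{thm:koszul-duality-bimodules}. The source is
\[
B(M\circ_\sP N)\simeq \unit\circ_\sQ M\circ_\sP N\circ_\sO\unit .
\]
For the target we use $BM \simeq \unit\circ_\sQ M \circ_\sP \unit$ and $BN\simeq \unit \circ_\sP N\circ_\sO \unit$. By \cref{prop: cobar-finite-totalization}, the cobar construction $BM\Box_{B\sP}BN$ is a finite totalization in each arity, so the composition product in each variable commutes with it. This lets us pull the outer factors $\unit\circ_\sQ M$ and $N\circ_\sO\unit$ out of the cobar construction. The target then becomes
\[
\unit\circ_\sQ M\circ_\sP\bigl(\unit\Box_{B\sP}\unit\bigr)\circ_\sP N\circ_\sO\unit .
\]
Here the middle factor $\unit\circ_\sP\unit\Box_{B\sP}\unit\circ_\sP\unit$ is understood as $B$ of the $(\sP,\sP)$-bimodule $\sP$ followed by $C$. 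More precisely, we will identify it with $\unit \circ_\sP (\sP \Box_{B\sP}\sP)\circ_\sP\unit$. The relative composition products are geometric realizations and commute with composition in both variables, since the composition product preserves sifted colimits. The key input is then \cref{thm:koszul-duality-operads} together with \cref{thm:koszul-duality-bimodules} applied to the identity bimodule: $C(B\sP)\simeq\sP$, and more precisely the unit $\sP \to \sP\circ_\sP\unit\Box_{B\sP}\unit\circ_\sP\sP$ is an equivalence of $(\sP,\sP)$-bimodules.

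The main obstacle will be checking that under these identifications the comparison map really is the Koszul duality unit. The comparison map is defined via Goodwillie calculus, through $\theta$ and $\partial_*$, and not via bar constructions. To handle this, I would use \cite[Corollary 4.4.4]{blansblom2025chainrulegoodwilliecalculus} as in the proof of \cref{prop: KD-on-objects-and-morphisms}. That result identifies $\partial_*(\Sigma^\infty_\sQ \reAlg(M)\Omega^\infty_\sP\Sigma^\infty_\sP\reAlg(N)\Omega^\infty_\sO)$, together with the unit map, with the map induced by $\sP\to \partial_*(\Omega^\infty_\sP\Sigma^\infty_\sP)$. Here $\partial_*(\Omega^\infty_\sP\Sigma^\infty_\sP)\simeq C B\sP\simeq \sP$ by the chain rule, and that map is the Koszul duality unit, an equivalence by \cref{thm:koszul-duality-operads}.

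Alternatively, if the explicit identification proves too delicate, I would reduce to free bimodules. Both sides preserve geometric realizations in $M$ and $N$: on the left this holds because $B$ is an equivalence, and on the right because of the finite totalization from \cref{prop: cobar-finite-totalization} in a stable setting. Every bimodule is a geometric realization of free ones $\sQ\circ X\circ\sP$. For free bimodules the comparison map reduces directly to the unit $\sP\simeq CB\sP$.
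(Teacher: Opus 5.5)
Your main route rests on a false identification. You claim $\partial_*(\Omega^\infty_\sP\Sigma^\infty_\sP)\simeq CB\sP\simeq\sP$, and that the unit of $(\Sigma^\infty_\sP,\Omega^\infty_\sP)$ induces the Koszul duality unit. This is not so. $\Sigma^\infty_\sP$ and $\Omega^\infty_\sP$ are both linear: one is a left adjoint into a stable category, the other a right adjoint out of one. Their derivatives are $\partial_*\Sigma^\infty_\sP\simeq\unit$ and $\partial_*\Omega^\infty_\sP\simeq\unit$, which is exactly why $B\sP\simeq\partial_*(\Sigma^\infty_\sP\Omega^\infty_\sP)\simeq\unit\circ_\sP\unit$. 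Hence $\partial_*(\Omega^\infty_\sP\Sigma^\infty_\sP)\simeq\unit\circ\unit\simeq\unit$ with trivial $(\sP,\sP)$-bimodule structure, and the unit induces the augmentation $\sP\to\unit$, which is not an equivalence. Correspondingly, $\theta$ only produces the map $B(M\circ_\sP N)\to\unit\circ_\sQ M\circ_\sP\unit\circ\unit\circ_\sP N\circ_\sO\unit\simeq BM\circ BN$, which is far from an equivalence; only its factorization through the limit $BM\Box_{B\sP}BN$ can be one.

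What actually recovers $\sP$ is the totalization of the cosimplicial bimodule $\partial_*\bigl((\Omega^\infty_\sP\Sigma^\infty_\sP)^{\bullet+1}\bigr)$, whose terms are $(B\sP)^{\circ k}$. To rescue your first route you would have to do two things coherently, as cosimplicial objects. First, identify the cobar complex computing $BM\Box_{B\sP}BN$ with $\unit\circ_\sQ M\circ_\sP\partial_*\bigl((\Omega^\infty_\sP\Sigma^\infty_\sP)^{\bullet+1}\bigr)\circ_\sP N\circ_\sO\unit$. Second, identify its coaugmentation from $\sP$ with the unit $\sP\to CB\sP$. The paper uses \cite[Corollary 4.4.4]{blansblom2025chainrulegoodwilliecalculus} only as a statement about the functor on a single mapping category, and it does not obviously supply this. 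Relatedly, $\sP\Box_{B\sP}\sP$ is undefined, since $\sP$ carries no $B\sP$-coaction. Also, ``pulling out'' $\unit\circ_\sQ M\circ_\sP-$ tacitly assumes the coaction on $BM$ comes from a coaction on the inner $\unit$, which is not how it arises.

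Your fallback is precisely the paper's proof, and it should be your main argument. Both sides commute with geometric realizations in each variable, the target by \cref{prop: cobar-finite-totalization} and stability. So it suffices to treat $M=\sQ\circ X\circ\sP$ and $N=\sP\circ Y\circ\sO$. Your phrase ``reduces directly to the unit'' should be backed by the computation the paper does, which uses that $B$ sends free bimodules to trivial bicomodules:
\begin{itemize}
\item The source is $B(\sQ\circ X\circ\sP\circ Y\circ\sO)\simeq\triv(X\circ\sP\circ Y)$.
\item The target is $\triv(X)\Box_{B\sP}\triv(Y)\simeq X\circ(\unit\Box_{B\sP}\unit)\circ Y\simeq\triv(X\circ\sP\circ Y)$.
\end{itemize}
Here $X$ and $Y$ can be moved out of the cobar construction because their coactions are trivial. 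That is exactly the situation in which the ``pulling out'' from your first route becomes legitimate.
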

\begin{proof}
    Given bimodules $M \in \MMor((\sB, \sP), (\sC, \sQ))$ and $N \in \MMor((\sA, \sO), (\sB, \sP))$, we need to show that the natural map
    \[
    B(M \circ_\sP N) \to B M \Box_{B\sP} BN
    \]
    of $(B\sO, B\sQ)$-bicomodules induced by $\KD$ is an equivalence.
    Since both source and target of this map commute with relative bar constructions in both variables by \cref{prop: cobar-finite-totalization}, it suffices to show the map is an equivalence in case $M$ and $N$ are free bimodules.
    
    So suppose $M =\sQ \circ X \circ \sP$ and $N = \sP \circ Y \circ \sO$.
    The source of the map becomes
    \[
    B(\sQ \circ X \circ \sP \circ_\sP \sP \circ Y \circ \sO) \simeq B(\sQ \circ X \circ \sP \circ Y \circ \sO) \simeq \triv(X \circ \sP \circ Y),
    \]
    where we used that Koszul duality sends free bimodules to trivial bicomodules e.g.\ by \cite[\SS 6-7]{heuts2024koszulduality}.
    On the other hand, the target of the map becomes
    \begin{align*}
        B(\sQ \circ X \circ \sP) \Box_{B\sP} B(\sP \circ Y \circ \sO) & \simeq \triv(X) \Box_{B\sP} \triv(Y) \\
        & \simeq \triv(X) \circ (\unit \Box_{B\sP} \unit) \circ \triv(Y) \\
        & \simeq \triv(X \circ \sP \circ Y).
    \end{align*}
    Here, we used that the functor $B$ takes free bimodules to trivial bicomodules, and that $\unit \Box_{B\sP} \unit $ is equivalent to the Koszul dual operad of $B\sP$, which is $\sP$.
    
    So both source and target of the map are given by $\triv(X \circ \sP \circ Y)$, and a simple diagram shows that it is an equivalence.
\end{proof}

\begin{remark}
    A special case of the equivalence $B(M \circ_\sP N) \simeq B M \Box_{B\sP} BN$ was proved by Arone--Ching in \cite[Theorem 4.5.2]{AroneChingChainRule}.
\end{remark}

\begin{proof}[Proof of \cref{thm: morita-comorita-equiv}]
    The functor $\KD \colon \MMor_+ \to \coMMor_+$ from \cref{constr: KD-construction} has the right effect on objects and mapping categories by \cref{prop: KD-on-objects-and-morphisms}. 
    It follows from Koszul duality for operads (\cref{thm:koszul-duality-operads}) that it is essentially surjective and from Koszul duality for bimodules (\cref{thm:koszul-duality-bimodules}) that it is fully faithful.
    Since it is a strong functor by \cref{lem: KD-strong}, this implies that it is an equivalence of $2$-categories.
\end{proof}

\begin{remark}
    By restricting the equivalence from this theorem to the full subcategory of $\MMor_+$ spanned by objects of the form $(\Sp, \sO)$, we obtain the $2$-categorical form of Koszul duality for spectral operads and bimodules stated in the introduction.
\end{remark}

\phantomsection
\printbibliography 

\end{document}